\newcommand{\eqq}[2]{\begin{equation}  #1  \label{#2} \end{equation}    }
\newcommand{\hd}{\hspace{0.2cm}}
\newcommand{\no}{\noindent}
\newcommand{\m}[1]{\mbox{ #1}}
\newcommand{\ww}{\subseteq }
\newcommand{\Om}{\Omega}
\newcommand{\ddt}{\frac{d}{dt}}
\newcommand{\ddta}{\frac{d}{d \tau}}
\newcommand{\rr}{\mathbb{R}}
\newcommand{\ald}{\frac{\al}{2}}
\newcommand{\jd}{\frac{1}{2}}
\newcommand{\tad}{t^{\ald}}
\newcommand{\Oml}{\Om_{l}}
\newcommand{\Omlt}{\Oml(t)}
\newcommand{\Oms}{\Om_{s}}
\newcommand{\Omst}{\Oms(t)}
\newcommand{\vlt}{V_{l}(t)}
\newcommand{\vst}{V_{s}(t)}
\newcommand{\nk}[1]{ \left[ #1 \right]}
\newcommand{\intfjmb}{I^{1-\al}}
\newcommand{\cabt}{D^{\al}}
\newcommand{\ep}{\varepsilon}
\newcommand{\ti}[1]{\tilde{#1}}
\newcommand{\sma}{s^{-1}(a)}
\newcommand{\smx}{s^{-1}(x)}
\newcommand{\da}{D^{\al}}
\newcommand{\xik}{\xi^{2}}
\newcommand*{\al}{\alpha}
\newcommand{\dasx}{D^{\al}_{  {\scriptsize{\smx}}  }}
\newcommand{\gja}{\Gamma(1-\al)}
\newcommand{\jgja}{\frac{1}{\gja}}
\newcommand{\ttaa}{(t-\tau)^{-\al}}
\newcommand{\la}{\lambda}
\newcommand{\ul}{u^{\la}}
\newcommand{\laa}{\la^{a}}
\newcommand{\lab}{\la^{b}}
\newcommand{\lac}{\la^{c}}
\newcommand{\lama}{\la^{-a}}
\newcommand{\lamb}{\la^{-b}}
\newcommand{\lamc}{\la^{-c}}
\newcommand{\lamba}{\la^{-b\al}}
\newcommand{\lamda}{\lambda^{-2a}}
\newcommand{\sm}{s^{-1}}
\newcommand{\xmda}{x^{-\frac{2}{\al}}}
\newcommand{\fda}{\frac{2}{\al}}
\newcommand{\fdak}{\left(\fda\right)^{2}}
\newcommand{\cjmda}{c_{1}^{-\fda}}
\newcommand{\xda}{x^{\fda}}
\newcommand{\icxi}{\int_{c_{0}}^{\xi}}
\newcommand{\xipal}{(\xi - p )^{-\al}}
\newcommand{\dakda}{\left[\fdak + \fda\right]}
\newcommand{\xima}{\frac{(\xi - c_{0})^{-\al}}{\gja}}
\newcommand{\fad}{\frac{\al}{2}}
\newcommand{\fadk}{\left( \frac{\al}{2} \right)^{2}}
\newcommand{\jga}{\frac{1}{\Gamma(\al)}}
\newcommand{\ga}{\Gamma(\al)}
\newcommand*{\norm}[1]{\left\Vert{#1}\right\Vert}
\newcommand*{\abs}[1]{\left\vert{#1}\right\vert}
\newcommand{\icz}{I_{c_{0}}}
\newcommand{\iczk}{I_{c_{0}}^{2}}
\newcommand{\ida}{I^{2-\al}_{c_{0}}}
\newcommand{\ija}{I^{1-\al}_{c_{0}}}
\newcommand{\inczxi}{\int_{c_{0}}^{\xi }}
\newcommand{\dakmda}{\left[ \fdak - \fda \right]}
\newcommand{\tdakmda}{\left[ 3\fdak - \fda \right]}
\newcommand{\ximk}{\xi^{-2}}
\newcommand{\Lcj}{L_{c_{1}}}
\newcommand{\Gcj}{G_{c_{1}}}
\newcommand{\inycj}{\int_{y}^{c_{1}}}
\newcommand{\inyp}{\int_{y}^{p}}
\newcommand{\fGG}{\frac{\Gamma(1+\fad)}{\Gamma(1-\fad)}}
\newcommand{\djma}{\partial^{1-\al}}
\newcommand{\djmasmx}{\djma_{\smx}}
\newcommand{\Dasmx}{D^{\al}_{\smx}}
\newcommand{\cjk}{\overline{c}_{1}}
\newcommand{\sal}{s_{\al}}
\newcommand{\ual}{u_{\al}}
\newcommand{\ualf}{\widetilde{u}_{\al}}
\newcommand{\uj}{u_{1}}
\newcommand{\tds}{t_{*}}
\newcommand{\tgs}{t^{*}}
\newcommand{\tjd}{t^{\frac{1}{2}}}
\newcommand{\sj}{s_{1}}
\newcommand{\Gcja}{G_{c_{1},\al}}
\newcommand{\Lcja}{L_{c_{1}, \al}}
\newcommand{\mja}{M_{1, \al}}
\newcommand{\mna}{M_{n, \al}}
\newcommand{\Gcjj}{G_{c_{1},1}}
\newcommand{\Lcjj}{L_{c_{1}, 1}}
\newcommand{\mjj}{M_{1, 1}}
\newcommand{\mnj}{M_{n, 1}}
\newcommand{\Gja}{\Gamma(1-\al)}
\newcommand{\lima}{\lim_{\al \nearrow 1}}
\newcommand{\sumn}{\sum_{n=0}^{\infty}}
\newcommand{\Qts}{Q_{\tds, \tgs}}
\newcommand{\tmad}{t^{-\ald}}
\newcommand{\tmjd}{t^{-\jd}}
\newcommand{\alz}{\al_{0}}
\newcommand{\smxa}{s^{-1}_{\al}(x)}
\newcommand{\smxj}{s^{-1}_{1}(x)}
\newcommand{\Dasmxa}{D^{\al}_{\smxa}}
\newcommand{\ualxx}{u_{\al, xx}}
\newcommand{\ualt}{u_{\al, t}}
\newcommand{\vp}{\varphi}
\newcommand{\insats}{\int_{\smxa}^{\tgs}}
\newcommand{\insjts}{\int_{\smxj}^{\tgs}}
\newcommand{\insat}{\int_{\smxa}^{t}}
\newcommand{\ta}{(t- \tau )^{-\al}}
\newcommand{\intats}{\int_{\tau}^{\tgs}}
\newtheorem{remark}{Remark}
\newtheorem{prop}{Proposition}
\newtheorem{corollary}{Corollary}
\newtheorem{theorem}{Theorem}
\begin{document}

\title{A self-similar solution to time-fractional Stefan problem}

\author{A. Kubica,  K. Ryszewska\\
\medskip\\
Department of Mathematics and Information Sciences\\
Warsaw University of Technology\\
pl. Politechniki 1, 00-661 Warsaw, Poland\\ \\
e-mail:
{\tt A.Kubica@mini.pw.edu.pl}\\
{\tt K.Ryszewska@mini.pw.edu.pl} \\
}

\maketitle
\begin{quote}
 \footnotesize
{\bf Abstract} We derive the fractional version of one-phase one-dimensional Stefan model. We assume that the diffusive flux is given by the time-fractional Riemann-Liouville derivative, i.e. we impose the memory effect in the examined model. Furthermore, we find a special solution to this problem.
\end{quote}

\bigskip\noindent
{\bf Key words:} fractional derivatives, Stefan problem, self-similar solution.

\bigskip\noindent
{\bf 2010 Mathematics Subject Classification.} Primary: 35R11 Secondary: 35R37

\section{Introduction}
The purpose of this paper is to study the process of changing the phase of medium, in which the diffusion exhibits non-local in time effects.
We are motivated by the paper~\cite{FV}, where the authors represent the non-locality in time, assuming that the diffusive flux is given in the form of time-fractional Riemann-Liouville derivative of temperature gradient, i.e.
\eqq{q^{*}(x,t)= - \djma  T_{x}(x,t). }{memflux}
Based on this assumption, the authors derived the sharp-interphase as well as the diffusive interphase fractional Stefan model. The sharp-interphase model obtained in \cite{FV} is characterized by the replacement of time derivative by fractional Caputo derivative. In last years several attempts to solve this problem have been done. In \cite{KRR} the authors proved the existence of weak solutions in non-cylindrical domain with fixed boundary. In \cite{Hopf} under suitable regularity assumptions the Hopf lemma was proven. It is also worth to mention the paper \cite{exa1} were the special solution to this problem was found. However, due to the lack of regularity results, the time-fractional Stefan problem with the Caputo derivative has not been solved.

It has been noticed already in \cite{VollSt} that the obtained diffusive-interphase model does not converge to the sharp one. This result encouraged the researchers to investigate the time-fractional Stefan model more deeply. In papers \cite{Cere}, \cite{Rosb} - \cite{Rose} the authors discussed other possible formulations of time-fractional Stefan problem and compare the formulas for special solutions.
In paper \cite{Ros} there is shown that the time-fractional sharp-interphase model obtained in \cite{FV} is not a consequence of the assumption (\ref{memflux}). Moreover, the authors obtained a new model based on (\ref{memflux}).
In this paper, we derive the sharp-interphase model with non-local flux given by (\ref{memflux}) under mild regularity assumptions. We arrive at the similar model as in \cite{Ros}, however we obtain additional boundary condition.
In both papers the model is derived from the law of energy conservation, however in \cite{Ros} only the liquid part of the domain is considered.
At last, we find a self-similar solution to this problem, which is the main result of this paper.

\section{Formulation of the problem}
In the paper we consider one-dimensional domain $\Om=(0,L)$, where $L$ is positive. We assume that at the initial time $t=0$ the domain $\Om$ is divided onto two parts: $(0,x_{0})$ - ``liquid'' and $(x_{0},L)$ - ``solid''. In particular, we admit the case where $x_{0}=0$. Following \cite{FV} we define the enthalpy function by $E= T+\phi$, where $T(x,t)$ is the temperature at point $x\in \Om$ at time $t$ and $\phi$ represents the latent heat.
We consider the sharp-interface model, hence we assume that $\phi$ is given in the following form
\eqq{\phi = \left\{ \begin{array}{ll}  1 & \m{ in liquid, }  \\
0  & \m{ in solid. }  \\ \end{array}  \right.}{a1}
We shall consider the one-phase model, i.e. we assume that $T\equiv 0$ in ``solid'' part. We denote by $q^{*}(x,t)$ the flux at $x\in \Om$ at time $t$. In this setting, the principle of energy conservation takes the following form: for every $V=(a,b)\ww \Om$
\eqq{\ddt \int_{V} E(x,t) dx  = q^{*}(a,t)- q^{*}(b,t). }{claw}

We may easily see that if the model does not exhibit memory effects then identity (\ref{claw}) leads to classical one-phase Stefan problem. We state this result in the remark.
\begin{remark}\label{remone}
If the flux is defined by the Fourier law $q^{*}(x,t)= -  T_{x}(x,t)$, then (\ref{claw}) leads to the classical Stefan problem
\eqq{\ddt T(x,t)- T_{xx}(x,t)=0  \hd \m{ for  } (x,t) \in [0,L]\times (0,\infty) \setminus \{(s(\tau),\tau): \hd \tau\in (0,\infty)\},  }{b2}
\eqq{\dot s(t) = - T_{x}^{-}(s(t),t)  \hd \m{ for  } \hd t>0,}{a6d}
where $s(t)$ denotes an interface and
\[
T_{x}^{-}(s(t),t) = \lim\limits_{\ep \rightarrow 0^{+}}T_{x}(s(t)-\ep,t).
\]
\end{remark}

In order to study non-local model we recall the definitions of fractional operators. By $I^{\al}_{a}$ we denote the fractional integral given by
\eqq{
I^{\al}_{a} f(t)= \frac{1}{\Gamma(\al)}\int_{a}^{t} (t- \tau)^{\al-1}f(\tau) d\tau.
}{defIala}
We also introduce the Riemann-Liouville and the Caputo fractional derivatives defined respectively by
\[
\partial^{\al}_{a}f(t) = \frac{d}{dt}I^{1-\al}_{a}f(t),\hd \hd D^{\al}_{a}f = \frac{d}{dt}I^{1-\al}_{a}[f(t)-f(a)] \hd \m{ for } \al \in (0,1).
\]
If a subscript $a = 0$ we omit it in a notation.
Following \cite{FV}, we assume that the flux is given by the Riemann-Liouville fractional derivative with respect to the time variable, i.e.
\[
q^{*}(x,t)= - \djma  T_{x}(x,t),
\]
where
\[
\djma  T_{x}(x,t) = \frac{1}{\Gamma(\al)} \ddt \int_{0}^{t} (t - \tau)^{\al-1}T_{x}(x,\tau) d\tau, \hd \al\in (0,1).
\]
We finish this section with a formal justification, why such a form of the flux seems to be reasonable in the model exhibiting memory effects.
\begin{remark}
Let us denote by $s(\cdot)$ the phase interface. We decompose the domain $\Om$ on the solid and liquid part.
\[
\Omlt = (0, s(t)) \m{  - liquid}, \hd \hd \Omst = (s(t),L) \m{  - solid}.
\]
Let $V\ww \Om$ be arbitrary. Then, if we assume that $V=(a,b)$ and denote
\[
\vlt=\Omlt \cap V, \hd \hd \vst = \Omst \cap V
\]
then, (\ref{claw}) takes the form
\eqq{\ddt \nk{ \int_{\vlt } (T(x,t)+ 1) dx   }  + \ddt \nk{ \int_{\vst} T(x,t) dx  } = \djma   T_{x}(b,t) - \djma T_{x}(a,t). }{clawmem}
Assuming that the temperature gradient is bounded with respect to time variable, having integrated with respect to time  we arrive at
\[
\int_{\vlt } (T(x,t)+ 1) dx     +  \int_{\vst} T(x,t) dx = \int_{V_{l}(0) } (T(x,0)+ 1) dx     +  \int_{V_{s}(0)} T(x,0) dx
\]
\eqq{  +\frac{1}{\Gamma(\al)}  \int_{0}^{t} (t- \tau )^{\al-1} \nk{ T_{x}(b,\tau) - T_{x}(a,\tau)  } d \tau, }{d1}
i.e. the total enthalpy in $V$ at time $t$ is a sum of the initial enthalpy and the time-average of differences of local fluxes at the endpoints of $V$.

\end{remark}

\section{Main results}
We derive  the fractional Stefan model from the balance law (\ref{claw}) with the diffusive flux given by (\ref{memflux}). In order to do it rigorously we have to impose certain regularity conditions on the interface $s$ and the temperature function $T$.
\no The standard setting of the initial-boundary condition for the Stefan problem is the following
\[
T(x,0)=T_{0}(x) \geq 0 \m{ and } \hd T(0,t)=T_{D}(t)\geq 0 \hd \m{ or } \hd T_{x}(0,t)=T_{N}(t) \leq 0.
\]
We expect that if $T_{0}\equiv 0$, $T_{D}\equiv 0$ or $T_{0}\equiv 0$, $T_{N}\equiv 0$, then $T\equiv 0$. Otherwise, we expect
\eqq{\dot s (t)>0,\tag{A1}}{c1}
i.e.  melting of solid.

Subsequently, let us assume that $t^{*}$ is positive and
\[
s(t) \in AC[0,t^{*}], \hd  T_{x}(x,\cdot) \in AC[s^{-1}(x),t^{*}] \m{ for every } x \in \Omega,
\]
\begin{equation}
T_{x}(\cdot,t) \in AC[0,s(t)-\varepsilon] \m{ for every } \varepsilon > 0 \m{ and every } t \in (0,t^{*}),
\tag{A2}
\end{equation}
\[
T_{t}(\cdot, t)\in L^{1}(0,s(t)) \hd \m{ for each } t\in (0,t^{*}),
\]
where we denote
\[
Q_{s,t^{*}}=\{(x,t): \hd 0<x<s(t), \hd t\in (0,t^{*}) \}.
\]
Here and henceforth by $AC$ we denote the space of absolutely continuous functions.

We note that since we consider one-phase Stefan problem the temperature in the solid vanishes. Therefore, the flux is nonzero only in the liquid part of the domain, i.e. in $Q_{s,t^{*}}$ and it is given by the formula
\eqq{q^{*}(x,t)= \left\{  \begin{array}{cll} - \djmasmx  T_{x}(x,t)&   \m{ for }   & (x,t)\in Q_{s,t^{*}}, \\
0 &   \m{ for }   & (x,t)\not \in Q_{s,t^{*}},
\end{array}
 \right. }{memfluxe}
where
\eqq{\djmasmx   T_{x}(x,t) = \left\{ \begin{array}{lll}  \frac{1}{\Gamma(\al)} \ddt  \int_{0}^{t} (t- \tau)^{\al-1}  T_{x}(x,\tau) d \tau   \hd & \m{ for  } \hd & x\leq s(0), \\ \frac{1}{\Gamma(\al)} \ddt  \int_{s^{-1}(x)}^{t} (t- \tau)^{\al-1}  T_{x}(x,\tau) d \tau   \hd & \m{ for  } \hd & x> s(0). \\ \end{array}  \right.}{c2}

\no  This together with (\ref{a1}) leads to the following form of equality  (\ref{claw})
\eqq{\ddt \nk{ \int_{\vlt } T(x,t)+ 1 dx   }   = -q^{*}(b,t) + q^{*}(a,t). }{clawmems}
The last of the regularity assumptions, that we will make advantage of, are
\begin{equation}
 \dot{s}(t)\in L^{\infty}_{loc}((0,t^{\ast}]) \m{ and \hd }  \Dasmx   T (\cdot, t)\in L^{1}(0,s(t)) \m{ for } t \in (0,t^{*}),
\tag{A3}
\end{equation}

Now we are ready to formulate the first result of this paper.
Let us discuss the sharp one-phase, one-dimensional Stefan problem with the boundary condition $T(s(t),t)=0$.
\begin{theorem}\label{derive}
Under the assumptions (A1)-(A2), the conservation law (\ref{claw}) with  the flux given by (\ref{memflux}) leads to the following equation
\eqq{ \Dasmx   T(x,t)-T_{xx}(x,t)= \left\{  \begin{array}{cll} 0 & \m{ for } & x<s(0) \\  - \frac{1}{\Gamma(1-\al)} (t-s^{-1}(x))^{-\al}  &  \m{ for } &  x\in (s(0), s(t)) \end{array}\right.}{md8c}
for a.a. $(x,t)\in Q_{s,t^{\ast}}$, where
\eqq{\Dasmx   T(x,t) = \left\{ \begin{array}{lll}  \frac{1}{\Gamma(1-\al)}   \int_{0}^{t} (t- \tau)^{-\al}  \ddta T(x,\tau) d \tau   \hd & \m{ for  } \hd & x\leq s(0) \\ \frac{1}{\Gamma(1-\al)}   \int_{s^{-1}(x)}^{t} (t- \tau)^{-\al} \ddta T(x,\tau) d \tau   \hd & \m{ for  } \hd & x> s(0). \\ \end{array}  \right.}{d2}
Moreover, functions $T$ and $s$ are related by the formula
\eqq{
\dot s(t) =  - \frac{1}{\Gamma(\al)}\lim_{a\nearrow s(t)} \left[  \ddt \int_{\sma}^{t} (t-\tau)^{\al-1}T_{x}(a,\tau) d \tau\right].}{mnewabc}
Furthermore, if $(A3)$ holds, then the additional boundary condition
\eqq{ T^{-}_{x}(s(t),t)=0, }{md9c}
is satisfied, where $T^{-}_{x}$ is defined as in Remark~\ref{remone}.
\end{theorem}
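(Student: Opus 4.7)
The plan is to integrate the balance law (\ref{clawmems}) in time over a spatial test interval $V=(a,b)$, differentiate in the right endpoint $b$, and then apply a Riemann--Liouville operator in $t$ to recover the pointwise equation (\ref{md8c}). The analysis splits according to whether $V\subset(0,s(0))$ (the original liquid) or $V\subset(s(0),s(t))$ (the region thawed after $t=0$). In the first case $V_l(\tau)=V$ for all $\tau$, so integrating (\ref{clawmems}) from $0$ to $t$ turns the order $1-\alpha$ Riemann--Liouville derivative in the flux into $I^{\alpha}_0$ (using $\int_0^t\partial^{1-\alpha}_0 f\,d\tau=I^{\alpha}_0 f(t)$), and differentiating in $b$ yields $T(x,t)-T_0(x)=I^{\alpha}_0 T_{xx}(x,t)$ for a.e.~$x$. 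Applying $\partial^{\alpha}_0$ and using both $\partial^{\alpha}_0 I^{\alpha}_0=\mathrm{id}$ and the identity $\partial^{\alpha}_0 f=D^{\alpha}_0 f+f(0)\,t^{-\alpha}/\Gamma(1-\alpha)$ gives $D^{\alpha}_0 T=T_{xx}$, the first branch of (\ref{md8c}).

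In the second case $V_l(\tau)$ evolves in three stages: empty, $(a,s(\tau))$, then $(a,b)$. Using $q^{*}\equiv 0$ in the solid and $T(s(\tau),\tau)=0$ to discard the moving-boundary contributions from the Leibniz rule, time integration gives
\[
\int_a^b T(x,t)\,dx+(b-a)=I^{\alpha}_{s^{-1}(b)}T_x(b,t)-I^{\alpha}_{s^{-1}(a)}T_x(a,t),
\]
and differentiating in $b$ produces a Leibniz contribution from the moving lower limit, yielding
\[
T(x,t)+1+\frac{(t-s^{-1}(x))^{\alpha-1}}{\Gamma(\alpha)\,\dot{s}(s^{-1}(x))}\,T_x(x,s^{-1}(x))=I^{\alpha}_{s^{-1}(x)}T_{xx}(x,t).
\]
Applying $\partial^{\alpha}_{s^{-1}(x)}$ to both sides closes this branch: the spurious singular factor $(t-s^{-1}(x))^{\alpha-1}$ lies in the kernel of $\partial^{\alpha}$ because $I^{1-\alpha}_a[(t-a)^{\alpha-1}]=\Gamma(\alpha)$ is $t$-constant; $\partial^{\alpha}_a[1]=(t-a)^{-\alpha}/\Gamma(1-\alpha)$ supplies the forcing term of (\ref{md8c}); and $\partial^{\alpha}_a I^{\alpha}_a T_{xx}=T_{xx}$. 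Since $T(x,s^{-1}(x))=0$ implies $\partial^{\alpha}_{s^{-1}(x)}T=D^{\alpha}_{s^{-1}(x)}T$, the second branch of (\ref{md8c}) follows.

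The Stefan condition (\ref{mnewabc}) is obtained by choosing $V=(a,b)$ with $a<s(t)<b$. Leibniz differentiation of $\int_a^{s(t)}(T+1)\,dx$, together with $T(s(t),t)=0$ and $q^{*}(b,\tau)=0$, gives $\int_a^{s(t)}T_t(x,t)\,dx+\dot{s}(t)=q^{*}(a,t)$; passing to the limit $a\nearrow s(t)$, in which the integral vanishes by (A2), produces (\ref{mnewabc}). For (\ref{md9c}) under (A3), I let $x\nearrow s(t)$ in the boxed identity. Then $T(x,t)\to 0$ and, using (\ref{md8c}) together with the $L^{1}$ control on $D^{\alpha}T$ from (A3), $I^{\alpha}_{s^{-1}(x)}T_{xx}(x,t)\to 0$, so the diverging factor $(t-s^{-1}(x))^{\alpha-1}$ forces $T_x(x,s^{-1}(x))\to 0$. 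Combining the absolute continuity of $T_x(x,\cdot)$ from (A2) with the local boundedness of $\dot{s}$ then bridges this thawing-time vanishing to $T_x^{-}(s(t),t)=0$ at the current time.

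The principal obstacle is precisely this last bridging step. The intermediate identity controls $T_x$ only along the thawing curve $\{(x,s^{-1}(x)):x>s(0)\}$, and converting the information on this curve into vanishing on the current-time slice requires a delicate combination of (A2), (A3), and the bulk equation (\ref{md8c}): one must show that the variation of $T_x(x,\cdot)$ over the shrinking window $[s^{-1}(x),t]$ is negligible as $x\nearrow s(t)$, and that the two limits (along the interface and at fixed $t$) coincide.
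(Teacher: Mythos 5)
Your derivation of the interior equation (\ref{md8c}) and of the Stefan condition (\ref{mnewabc}) follows a genuinely different order of operations from the paper's (integrate the balance law in time first, differentiate in the spatial endpoint, then apply $\partial^{\alpha}$), and it is plausible at a formal level, though it silently uses regularity that (A1)--(A2) do not provide: to differentiate $I^{\alpha}_{s^{-1}(b)}T_{x}(b,t)$ in $b$ you need $T_{xx}(b,\cdot)$ to be defined and integrable in time against the singular kernel, and you need $s^{-1}$ to be differentiable with $\frac{d}{db}s^{-1}(b)=1/\dot{s}(s^{-1}(b))$; neither follows from $s\in AC$ and $T_{x}(\cdot,t)\in AC$ in $x$ at fixed $t$. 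The paper avoids both issues by keeping the final time fixed: it differentiates the balance law in $t$, applies $I^{1-\alpha}_{s^{-1}(a)}$, uses Fubini, rewrites $-T_{x}(a,t)$ as $\int_{a}^{s(t)-\varepsilon}T_{xx}(x,t)\,dx-T_{x}(s(t)-\varepsilon,t)$ by the fundamental theorem of calculus in $x$ alone, and then subtracts two copies of the resulting identity to localize.

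The genuine gap is in your treatment of (\ref{md9c}), and you have named it yourself without closing it. Letting $x\nearrow s(t)$ in your boxed identity (after back-substituting (\ref{md8c}), which gives $I^{\alpha}_{s^{-1}(x)}T_{xx}(x,t)=T(x,t)+1$) in fact yields $T_{x}(x,s^{-1}(x))=0$, i.e.\ vanishing of $T_{x}$ \emph{along the free boundary at the thawing time}; but (\ref{md9c}) asserts $\lim_{\varepsilon\to0^{+}}T_{x}(s(t)-\varepsilon,t)=0$ on the \emph{fixed-time slice}. The bridge you invoke --- that the variation of $T_{x}(x,\cdot)$ over $[s^{-1}(x),t]$ is negligible as $x\nearrow s(t)$ --- does not follow from (A2): absolute continuity of $T_{x}(x,\cdot)$ holds for each fixed $x$ with no uniformity as $x\nearrow s(t)$, so the two limits need not coincide. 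This is exactly where (A3) must enter, and the paper uses it differently: from its fixed-time derivation it obtains $0=-\int_{s(t)-\varepsilon}^{s(t)}\bigl[D^{\alpha}_{s^{-1}(x)}T(x,t)+\frac{1}{\Gamma(1-\alpha)}(t-s^{-1}(x))^{-\alpha}\bigr]dx-T_{x}(s(t)-\varepsilon,t)$, and (A3) (integrability in $x$ of $D^{\alpha}_{s^{-1}(\cdot)}T(\cdot,t)$ together with local boundedness of $\dot{s}$) makes both integrals vanish as $\varepsilon\to0^{+}$, which gives the fixed-time limit directly. To repair your argument you would need either to produce such a fixed-time identity or to add a hypothesis giving uniform control of $\int_{s^{-1}(x)}^{t}|T_{xt}(x,\tau)|\,d\tau$ as $x\nearrow s(t)$.
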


\begin{remark}
We note that the equation (\ref{md8c}) with the condition (\ref{mnewabc}) have been already obtained in \cite{Ros}. It is worth to mention that the fractional Stefan problem with the flux given by the Riemann-Liouville derivative were considered in \cite{FV}. However, the Authors obtained the following system of equations
\eqq{\da   T(x,t)dx-T_{xx}(x,t)=0,}{Voll1}
\eqq{\da s(t) = - T_{x}(s(t),t),}{Voll2}
(see (17)and (18) in \cite{FV}). As pointed out in \cite{Ros} and proved by careful calculations, the equations (\ref{Voll1}) and (\ref{Voll2}) are not the consequences of the assumptions imposed on the flux.

In this paper we present another derivation of (\ref{md8c}) and (\ref{mnewabc}), which leads to the additional boundary condition (\ref{md9c}). Then, the following questions arise:
\begin{itemize}
\item
isn't the assumption (A3) to strong and the "unexpected" boundary condition (\ref{md9c}) is not appropriate?
\item
is there any relation between (\ref{mnewabc}) and (\ref{md9c})?
\end{itemize}
We partially answer to these questions. We show that, at least in the class of self-similar solutions, (A3) is satisfied and (\ref{mnewabc}) implies (\ref{md9c}) and conversely.
\end{remark}

\begin{remark}
Passing formally with $\al$ to $1$ in equations (\ref{md8c}) and (\ref{mnewabc}) we arrive at (\ref{b2})~-~ (\ref{a6d}). Indeed, assuming that $T(s(t),t) = 0$ we may write
\[
D^{\al}_{s^{-1}(x)}T(x,t) + \frac{1}{\Gamma(1-\al)}(t-s^{-1}(x))^{-\al} = \partial^{\al}_{s^{-1}(x)}[T(x,t)+1].
\]
Then,
\[
\partial^{\al}_{s^{-1}(x)}[T(x,t)+1]\rightarrow T_{t}(x,t) \m{ as } \al\rightarrow 1.
\]
Moreover, by (\ref{mnewabc}) and
\[
\lim_{a\rightarrow s(t)}\partial^{1-\al}_{s^{-1}(a)}T_{x}(a,t)\rightarrow T_{x}^{-}(s(t),t) \m{ as }\al\rightarrow 1
\]
we arrive at (\ref{a6d}). We shall examine these convergence more rigorously in section~\ref{classical} and we show that at  least in the class of self-similar solution, the additional boundary condition (\ref{md9c}) disappears for $\al =1$. This phenomenon seems to be quite interesting, because for $\al\in (0,1)$ conditions (\ref{mnewabc}) and (\ref{md9c}) are equivalent.

\end{remark}
Now we will present the second result of this paper. We will find a self-similar solution to the time-fractional Stefan problem in the domain
\eqq{U=\{(x,t)\in \rr \times (0,\infty) : \hd 0<x< s(t) \}, }{laa}
where $(s(t),t)$ is the curve separating the phases.  We impose a constant positive Dirichlet boundary condition on the left boundary and we assume that $s(0)=0$. In this case, the problem formulated in Theorem \ref{derive} takes the following form
\eqq{\dasx u(x,t) = u_{xx}(x,t) - \jgja (t - \smx )^{-\al} \hd \m{ in } \hd  U, }{lab}
\eqq{u(s(t),t) =0,}{kds}
\eqq{u(0,t) = \gamma,}{diri}
\eqq{\dot{s}(t) = - \jga  \lim_{a \nearrow s(t)} \ddt \left[ \int_{\sm (a)}^{t} (t - \tau)^{\al - 1} u_{x}(a, \tau ) d\tau   \right].  }{frees}

\begin{theorem}
\label{samo}
For any constant $\gamma > 0$ there exists a pair $(u,s)$ which satisfies (\ref{lab})-(\ref{frees}). Furthermore, the solution is given by
\eqq{s(t)= c_{1} t^{\fad},}{selfst}

\eqq{u(x,t)=\int_{xt^{-\fad}}^{c_{1}} H(p, xt^{-\fad})G_{c_{1}}(p)dp \hd \m{ in } \hd U,  }{selfuxt}
where $c_{1} = c_{1}(\al,\gamma) > 0$ and
\eqq{\Gcj (y) = \jgja \inycj (1- c_{1}^{-\fda} \mu^{\fda})^{-\al} d\mu \hd \m{ for } \hd  0\leq y \leq c_{1}, }{selfGcj}
\eqq{H(p,x) = 1+ \int_{x}^{p} N(p,y)dy \hd \m{ for } \hd 0\leq x \leq p,   }{selfdefH}
\eqq{N(p,y) = \sum_{n=1}^{\infty} M_{n}(p,y) \hd \m{ for } \hd 0\leq y \leq p , }{selfdefsumM}
where
\eqq{M_{1}(p,y) = \jgja \inyp (1- p^{-\fda } \mu^{\fda})^{-\al} d\mu \hd \m{ for } \hd 0\leq y \leq p  }{selfdefM}
and
\eqq{ M_{n}(p,y)= \int_{y}^{p} M_{1}(a,y)M_{n-1}(p, a) da \m{ for } \hd 0\leq y \leq p  \m{ and  } \hd n\geq 2. }{selfdefMn}
For every $R>0$ the series (\ref{selfdefsumM}) converges uniformly on $W_{R}=\{(p,y): \hd 0 \leq y\leq p \leq R \}$. Functions  $M_{n}$, $N$ are positive on $\{(p,y): \hd 0\leq y < p  \}$, hence $u$ is positive in $U$.

\no For every $a, \la >0$ function $u$ satisfies the scaling property
\eqq{u(x,t)=u(\laa x, \la^{\frac{2a}{\al}} t) }{selfself}
and
\eqq{u_{x}(s(t),t)= 0.}{kdt}
Furthermore, for every $t>0$ there holds  $u(\cdot , t)\in W^{2,1}(0,s(t))$ and $u_{t}(x,\cdot ) \in C([\sm(x), \infty))$ for every $x>0$. Finally, we have $u_{x}(x,\cdot) \in L^{\infty}(s^{-1}(x),\infty)\cap AC_{loc} ([s^{-1}(x),\infty))$ for every $x > 0$ and $u_{t}(\cdot,t) \in L^{1}(0,s(t))$, $D^{\al}_{s^{-1}(\cdot)}u(\cdot,t) \in L^{1}(0,s(t))$ for every $t > 0$. In particular, the pair $(u,s)$ satisfies the assumptions (A1) - (A3).
\end{theorem}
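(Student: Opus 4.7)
The plan is to substitute a self-similar ansatz, reduce the problem to a one-dimensional integro-differential equation for a profile $U$, solve it by a Picard/Neumann-series iteration, and finally determine the constant $c_{1}$ from the Dirichlet datum and verify the remaining conditions via the explicit structure.

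\textbf{Reduction to a profile equation.} Under the ansatz $s(t)=c_{1}t^{\fad}$ and $u(x,t)=U(\xi)$ with $\xi = xt^{-\fad}$, one has $\smx=(x/c_{1})^{\fda}$ and $u_{xx}=t^{-\al}U''(\xi)$. Using $u_{\tau}(x,\tau)=-\fad\,x\,\tau^{-\fad-1}U'(x\tau^{-\fad})$ and the change of variable $\tau = x^{\fda}\mu^{-\fda}$, which maps $[\smx,t]$ bijectively onto $[\xi,c_{1}]$, a direct computation shows that $\dasx u(x,t)=t^{-\al}\Phi(\xi)$ for a $\xi$-only functional $\Phi$ involving $U'$ against the kernel $(1-c_{1}^{-\fda}\mu^{\fda})^{-\al}$. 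The forcing $-\jgja(t-\smx)^{-\al}$ collapses likewise to $t^{-\al}$ times a function of $\xi$ alone. Equation (\ref{lab}) therefore reduces to an integro-differential equation for $U$ on $(0,c_{1})$ with boundary data $U(c_{1})=0$, together with the complementary condition $U'(c_{1})=0$ needed to make two integrations in $\xi$ unambiguous; this latter condition is exactly (\ref{kdt}).

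\textbf{Neumann-series construction.} Integrating the profile equation twice from $\xi$ to $c_{1}$ and imposing both endpoint conditions, I would recast the problem as a Volterra equation of the second kind for $U$, with inhomogeneity $\int_{\xi}^{c_{1}}G_{c_{1}}(p)\,dp$ and Volterra kernel $M_{1}(p,\xi)$; the explicit formulas (\ref{selfGcj})--(\ref{selfdefM}) are precisely what arises. Since $\al\in(0,1)$ the endpoint singularity is integrable, yielding the bound $M_{1}(p,y)\le C(R)(p-y)^{1-\al}$ on $W_{R}$. An induction based on (\ref{selfdefMn}) then gives $M_{n}(p,y)\le C(R)^{n}(p-y)^{n(1-\al)}/\Gamma(1+n(1-\al))$, which implies uniform convergence of $N=\sum_{n\ge1}M_{n}$ on every $W_{R}$, hence convergence of (\ref{selfuxt}). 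Positivity of $M_{1}$ and an induction on $n$ give positivity of each $M_{n}$, of $N$, and finally of $u$ in $U$. The classical resolvent identity produces the representation (\ref{selfuxt}) with $H$ defined by (\ref{selfdefH}).

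\textbf{Identifying $c_{1}$ and verifying the boundary/Stefan conditions.} The Dirichlet datum (\ref{diri}) reads $\gamma=U(0)=\int_{0}^{c_{1}}H(p,0)G_{c_{1}}(p)\,dp$. Using the uniform convergence of the Neumann series and the explicit dependence of $G_{c_{1}}$ and $M_{1}$ on the parameter, I would show that the right-hand side is a continuous, strictly increasing function of $c_{1}$ vanishing at $0$ and tending to $+\infty$, so the intermediate value theorem supplies a unique admissible $c_{1}=c_{1}(\al,\gamma)>0$. The Stefan condition (\ref{frees}) is handled by applying the same change of variable $\tau=a^{\fda}\mu^{-\fda}$ inside the limit $a\nearrow s(t)$: the bracketed expression becomes a $\xi$-only quantity evaluated at $\mu=\xi=at^{-\fad}$, and using $U'(c_{1})=0$ together with $\dot s(t)=\fad\, c_{1}\,t^{\fad-1}$ matches the two sides. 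This argument simultaneously establishes, within the self-similar class, the equivalence of (\ref{mnewabc}) and (\ref{md9c}) announced in the preceding remark.

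\textbf{Scaling, regularity, and the main obstacle.} The scaling (\ref{selfself}) is immediate from $u(x,t)=U(xt^{-\fad})$, since $(x,t)\mapsto(\laa x,\la^{2a/\al}t)$ fixes $\xi$. The regularity statements follow from compact-set regularity of $U$ and integrability of the kernels near $c_{1}$: differentiating (\ref{selfuxt}) in $x$ and $t$ against the kernel bounds yields $u(\cdot,t)\in W^{2,1}(0,s(t))$, $u_{t}(\cdot,t)\in L^{1}(0,s(t))$, $u_{x}(x,\cdot)\in L^{\infty}\cap AC_{\mathrm{loc}}$, and continuity of $u_{t}(x,\cdot)$; the $L^{1}$ estimate on $\dasx u(\cdot,t)$ is read off from the profile equation. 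Hence (A1)--(A3) hold. The main obstacle will be the careful bookkeeping at the singular endpoint $\mu=p$: justifying the interchange of $\dasx$ with the Neumann series in (\ref{selfuxt}) to verify (\ref{lab}) pointwise, and rigorously passing to the limit $a\nearrow s(t)$ in (\ref{frees}), both demand delicate dominated-convergence arguments against the integrable but non-uniformly integrable singularity of the kernel.
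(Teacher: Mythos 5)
Your overall route coincides with the paper's: similarity reduction, a Volterra equation of the second kind solved by a Neumann series whose iterated kernels are the $M_{n}$, positivity read off from positivity of the kernel and of $G_{c_{1}}$, the resolvent identity giving the representation with $H$, and the intermediate value theorem in $c_{1}$ for the Dirichlet datum. The one real difference is that you extract existence directly from the convergent Neumann series, whereas the paper first proves existence and uniqueness of the profile in the variable $\xi=tx^{-2/\al}$ by a Fredholm-alternative plus Gronwall argument (Proposition \ref{existf}) and only afterwards passes to the variable $\mu=xt^{-\fad}$ to obtain the series; your shortcut is legitimate and arguably cleaner, at the cost of the uniqueness statement the paper gets along the way. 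Two harmless slips: the natural second-kind equation produced by one integration is for $U'$ with inhomogeneity $-G_{c_{1}}$, not for $U$ itself; and the induction from $M_{1}(p,y)\le C(p-y)^{1-\al}$ yields $M_{n}(p,y)\le C^{n}\Gamma(2-\al)^{n}(p-y)^{n(2-\al)-1}/\Gamma(n(2-\al))$ rather than the exponent you wrote, though either bound gives the uniform convergence on $W_{R}$.

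The concrete gap is the verification of the Stefan condition (\ref{frees}). You propose to ``match the two sides'' using $U'(c_{1})=0$, but that condition alone forces the naive limit of $u_{x}(a,t)$ to vanish and cannot by itself produce the nonzero right-hand side $\dot s(t)=\fad c_{1}t^{\fad-1}$. The entire content of (\ref{frees}) sits in the \emph{rate} at which the profile derivative vanishes at the interface: one must show $U'(\mu)\sim -C(c_{1}-\mu)^{1-\al}$ with the precise constant dictated by the forcing term $-\jgja(t-\smx)^{-\al}$ --- equivalently the blow-up (\ref{fxid}) of $f''$ at $c_{0}$ --- and then evaluate $\lim_{b \searrow c_{0}}\frac{d}{db}\int_{c_{0}}^{b}(b-p)^{\al-1}f'(p)\,dp$ as in Propositions \ref{fprim} and \ref{istf}. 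This is a computation of the leading singular coefficient of the profile at the free boundary, not a dominated-convergence issue as your closing paragraph suggests; without it the free-boundary condition, and hence one of the four equations the theorem asserts, remains unverified.
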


\begin{corollary}
\label{zcjgamma}
If $c_{1}$ is a positive constant and
\[
\gamma = \int_{0}^{c_{1}} H(p,0)G_{c_{1}}(p)dp,
\]
then  (\ref{selfst})-(\ref{selfuxt}) define a solution to (\ref{lab})-(\ref{frees}).
\end{corollary}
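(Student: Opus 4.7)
The plan is to read the proof of Theorem~\ref{samo} from the opposite direction, exploiting the fact that the formulas (\ref{selfst})-(\ref{selfuxt}), together with the auxiliary objects (\ref{selfGcj})-(\ref{selfdefMn}), are parametrized by $c_{1}$ and $\al$ alone, while the constant $\gamma$ enters the problem only through the Dirichlet condition (\ref{diri}). Hence the corollary should follow simply by inverting the roles of $\gamma$ and $c_{1}$.

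First I would note that the proof of Theorem~\ref{samo} establishes that, for any $c_{1} > 0$, the pair $(u,s)$ defined by (\ref{selfst})-(\ref{selfuxt}) satisfies the governing equation (\ref{lab}), the condition (\ref{kds}) on the free boundary, the Stefan condition (\ref{frees}), the scaling identity (\ref{selfself}), the Neumann-type condition (\ref{kdt}), and all the regularity assertions collected at the end of Theorem~\ref{samo}. None of these verifications involves $\gamma$, so they remain valid verbatim when we prescribe $c_{1}$ first and define $\gamma$ afterwards.

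It then remains only to check the Dirichlet condition (\ref{diri}). Evaluating (\ref{selfuxt}) at $x=0$ yields
\[
u(0,t) = \int_{0}^{c_{1}} H(p,0)\, G_{c_{1}}(p)\, dp,
\]
an expression which is manifestly independent of $t$. The continuous extension of $u(\cdot,t)$ up to $x=0$ is guaranteed by the $W^{2,1}(0,s(t))$ regularity from Theorem~\ref{samo} via the one-dimensional Sobolev embedding. Therefore if we \emph{define} $\gamma$ by the above integral, (\ref{diri}) holds automatically, and the pair $(u,s)$ is a solution of (\ref{lab})-(\ref{frees}).

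No real obstacle arises: the corollary is essentially a re-indexing of Theorem~\ref{samo} in which $c_{1}$, rather than $\gamma$, is the free parameter. The only point that deserves a moment of care is the integrability of $p \mapsto H(p,0)\, G_{c_{1}}(p)$ on $(0,c_{1})$, which was already established during the proof of Theorem~\ref{samo} when computing the boundary trace of $u(\cdot,t)$.
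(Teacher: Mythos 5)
Your argument is correct and is exactly the route the paper intends: the construction in Corollaries~\ref{coroczi}--\ref{coroa} and Proposition~\ref{propfajf} is parametrized by $c_{1}$ alone, with $\gamma$ entering only through the boundary trace $u(0,t)=F_{c_{1}}(0)=\int_{0}^{c_{1}}H(p,0)G_{c_{1}}(p)dp$, so fixing $c_{1}$ and defining $\gamma$ by that integral gives the solution without the Darboux argument of Proposition~\ref{gammcj}. No issues.
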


\begin{corollary}\label{neum}
If the Dirichlet condition (\ref{diri}) is replaced by the Neumann condition
\[
u_{x}(0,t) = -\beta t^{-\frac{\al}{2}}, \hd \beta > 0,
\]
then Theorem \ref{samo} holds with $c_{1} = c_{1}(\al,\beta) > 0$.
\end{corollary}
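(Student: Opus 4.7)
The key observation is that the self-similar solution $u(x,t) = v(x t^{-\al/2})$ from Theorem~\ref{samo}, with $v(\xi) := \int_\xi^{c_1} H(p,\xi)\Gcj(p)\,dp$, satisfies $u_x(0,t) = v'(0)\, t^{-\al/2}$ by the chain rule. Therefore the Neumann condition $u_x(0,t) = -\beta t^{-\al/2}$ reduces to the single scalar identity $-v'(0) = \beta$. Since the fractional equation (\ref{lab}), the vanishing (\ref{kds}) at the free boundary, the Stefan relation (\ref{frees}), the scaling (\ref{selfself}), the extra boundary condition (\ref{kdt}) and every regularity assertion in Theorem~\ref{samo} were proved for arbitrary $c_1 > 0$ and involve only the interior and the free boundary, my plan is to reuse the entire construction verbatim and merely choose a new $c_1 = c_1(\al,\beta)$ matching the prescribed Neumann flux.

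To compute $v'(0)$ I would apply Leibniz's rule. From (\ref{selfdefH}) one has $H(\xi,\xi) = 1$ and $\partial_\xi H(p,\xi) = -N(p,\xi)$, hence
\[
v'(\xi) = -\Gcj(\xi) - \int_\xi^{c_1} N(p,\xi)\,\Gcj(p)\,dp,\qquad F(c_1) := -v'(0) = \Gcj(0) + \int_0^{c_1} N(p,0)\,\Gcj(p)\,dp > 0.
\]
It then remains to show that $F\colon (0,\infty)\to(0,\infty)$ is a continuous surjection.

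For this I would exploit the scaling hidden in the definitions (\ref{selfGcj})--(\ref{selfdefMn}). The substitution $\mu = c_1\nu$ in (\ref{selfGcj}) gives $\Gcj(p) = c_1 \phi(p/c_1)$ with a positive function $\phi$ depending only on $\al$; an analogous rescaling in (\ref{selfdefM}) and induction through (\ref{selfdefMn}) yield the homogeneity $M_n(\lambda p, \lambda y) = \lambda^{2n-1} M_n(p,y)$, so in particular $M_n(p,0) = p^{2n-1} m_n$ with $m_n := M_n(1,0) > 0$. Inserting these identities into $F$, changing variables $p = c_1 q$ in the integral, and exchanging sum and integral by Tonelli (legitimate since the series (\ref{selfdefsumM}) has nonnegative terms and converges uniformly on compacta by Theorem~\ref{samo}), I obtain
\[
F(c_1) = K_0\, c_1 + \sum_{n=1}^\infty A_n m_n\, c_1^{2n+1}, \qquad K_0, A_n > 0.
\]
Consequently $F$ is continuous and strictly increasing on $(0,\infty)$, with $F(0^+) = 0$ and $F(c_1) \ge K_0 c_1 \to \infty$; the intermediate value theorem then furnishes a unique $c_1 = c_1(\al,\beta) > 0$ with $F(c_1) = \beta$.

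The main bookkeeping issues I anticipate are the justification of the Fubini/Tonelli exchange and the verification that $K_0$, $A_n$, $m_n$ are genuinely positive and finite; both are routine given the uniform convergence of (\ref{selfdefsumM}) already established in Theorem~\ref{samo}. Once $c_1 = c_1(\al,\beta)$ is chosen, the pair $(u,s)$ defined by (\ref{selfst})--(\ref{selfuxt}) inherits every conclusion of Theorem~\ref{samo} verbatim, with (\ref{diri}) replaced by the prescribed Neumann condition, completing the proof.
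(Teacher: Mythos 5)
Your proposal is correct and follows essentially the same route as the paper: both reduce the Neumann condition to the scalar equation $g(c_{1})=\beta$ with $g(c_{1})=G_{c_{1}}(0)+\int_{0}^{c_{1}}N(p,0)G_{c_{1}}(p)\,dp$ (note $G_{c_{1}}(0)=c_{1}\Gamma(1+\frac{\al}{2})/\Gamma(1-\frac{\al}{2})$, matching your $K_{0}c_{1}$) and conclude by continuity, the limits at $0$ and $\infty$, and the intermediate value property. Your extra homogeneity observation $M_{n}(\lambda p,\lambda y)=\lambda^{2n-1}M_{n}(p,y)$, which turns $g$ into a power series with positive coefficients, is a valid refinement that additionally yields strict monotonicity and hence uniqueness of $c_{1}(\al,\beta)$, which the paper does not claim.
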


At last, we obtain the convergence of self-similar solutions to fractional Stefan problem to a solution to the classical Stefan problem. To formulate the result we introduce new notation. We fix $c_{1}>0$ and for $\al \in (0,1)$ we denote by $\sal$ and $\ual$ the solution to fractional Stefan problem ((\ref{lab}) - (\ref{frees})) given by (\ref{selfst}) and (\ref{selfuxt}). Then we set
\eqq{\ualf(x,t)= \left\{ \begin{array}{cll} \ual (x,t) & \m{ for } & \hd t>0, \hd x\in [0,\sal(t)], \\ 0 & \m{  for }  & \hd  t>0, \hd  x>\sal(t).  \\ \end{array}    \right.}{defualf}

\begin{theorem}
Let us fix $0<\tds< \tgs$.
If $\al \nearrow 1$, then $\ualf$ converges uniformly on the set $\{(x,t): \hd t\in [\tds, \tgs ], \hd x\in [0, c_{1} \tjd ]  \}$ to $\uj$, where
$\uj$ is a solution to the classical Stefan problem corresponding to the free boundary $s_{1}:=c_{1}\tjd$, i.e. $\sj$ and $\uj $ satisfy
\eqq{u_{1,t}(x,t) - u_{1,xx}(x,t)=0 \hd \m{ for  } \hd t>0, \hd x\in (0, \sj(t)),  }{classa}
\eqq{\uj(\sj(t),t)=0 \m{ \hd  for \hd }  t>0,}{classb}
\eqq{\uj(0,t) = 2ae^{a^{2}}\int_{0}^{a}e^{-w^{2}}dw\m{ \hd  for \hd }  t>0, \m{ \hd where\hd } a=\frac{c_{1}}{2},   }{classc}
\eqq{\ddt \sj(t)= - u_{1,x}(\sj(t), t ) \m{ \hd  for \hd }  t>0}{classd}
and $\uj $ is given by the formula
\eqq{\uj(x,t)=  2ae^{a^{2}}\int_{\frac{x}{2\sqrt{t}}}^{a}e^{-w^{2}}dw.}{classe}
\label{uniformly}
\end{theorem}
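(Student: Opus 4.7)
The plan is to exploit the self-similar form of both $\ual$ and $\uj$, reducing the statement to uniform convergence of a one-variable profile. Setting $\eta=xt^{-\al/2}$ and
\[
V_{\al}(\eta)=\int_{\eta}^{c_{1}}H_{\al}(p,\eta)\,\Gcja(p)\,dp\quad\text{for }\eta\in[0,c_{1}],
\]
extended by $0$ for $\eta>c_{1}$, one has $\ualf(x,t)=V_{\al}(xt^{-\al/2})$ on $\{x\ge 0,\,t>0\}$. Similarly, $\uj(x,t)=V_{1}(xt^{-1/2})$ where $V_{1}(\eta)=2ae^{a^{2}}\int_{\eta/2}^{a}e^{-w^{2}}dw$ for $\eta\in[0,c_{1}]$ and $V_{1}=0$ beyond. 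Since $xt^{-\al/2}\to xt^{-1/2}$ and $\sal(t)\to\sj(t)$ uniformly on the compact set $\{(x,t):t\in[\tds,\tgs],\,0\le x\le c_{1}\tjd\}$, the theorem reduces to showing $V_{\al}\to V_{1}$ uniformly on $[0,c_{1}]$.

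\textbf{Compactness.} Differentiating (\ref{selfuxt}) and using $H_{\al}(\eta,\eta)=1$ yields
\[
V_{\al}'(\eta)=-\Gcja(\eta)-\int_{\eta}^{c_{1}}N_{\al}(p,\eta)\,\Gcja(p)\,dp.
\]
I would prove uniform-in-$\al$ bounds on $\Gcja$, $N_{\al}$, $V_{\al}$ and $V_{\al}'$ over $[0,c_{1}]$ by applying the substitution $\mu=p\sigma^{\al/2}$ and one integration by parts in $\sigma$ to (\ref{selfGcj}) and (\ref{selfdefM}). This turns the singular kernel $(1-\sigma)^{-\al}/\Gja$ into $(1-\sigma)^{1-\al}/\Gamma(2-\al)$, bounded uniformly for $\al$ close to $1$. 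Propagating these bounds through the convolution recursion (\ref{selfdefMn}) and the series (\ref{selfdefsumM}) gives uniform estimates on $N_{\al}$ over $W_{c_{1}}$. By Arzel\`a--Ascoli, each sequence $\al_{n}\nearrow 1$ admits a subsequence along which $V_{\al_{n_{k}}}\to W$ uniformly on $[0,c_{1}]$ for some continuous $W$ with $W(c_{1})=0$.

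\textbf{Identification and conclusion.} Rewriting (\ref{lab}) in the self-similar variable, $V_{\al}$ satisfies a nonlocal equation on $(0,c_{1})$ whose coefficients converge, as $\al\nearrow 1$, to those of the classical self-similar heat equation. Passing to the limit in this equation and in (\ref{frees}) (using $D^{\al}f\to f_{t}$ and $\partial^{1-\al}g\to g$ pointwise for smooth $f,g$, as in Remark~3) shows that any subsequential limit $W$ satisfies
\[
W''(\eta)+\frac{\eta}{2}W'(\eta)=0\text{ on }(0,c_{1}),\qquad W(c_{1})=0,\qquad -W'(c_{1})=\frac{c_{1}}{2}.
\]
This boundary-value problem has the unique solution $W=V_{1}$, so the full family $V_{\al}$ converges uniformly to $V_{1}$ on $[0,c_{1}]$. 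Combined with the self-similar reduction above, this yields the asserted uniform convergence of $\ualf$ to $\uj$ on the prescribed compact set.

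\textbf{Main obstacle.} The technical heart of the proof is the uniform-in-$\al$ control of $\Gcja$, $\mna$ and the series (\ref{selfdefsumM}) in a left-neighbourhood of $\al=1$: the singular kernels $(1-c_{1}^{-2/\al}\mu^{2/\al})^{-\al}$ and $(1-p^{-2/\al}\mu^{2/\al})^{-\al}$ are compensated only by the factor $1/\Gja$, which degenerates at $\al=1$, while $\mja(p,y)$ develops a discontinuous pointwise limit on the diagonal $y=p$. Making the integration-by-parts compensation survive the iteration (\ref{selfdefMn}) without losing uniformity in $\al$ is the delicate analytic step on which the whole compactness-and-uniqueness scheme rests.
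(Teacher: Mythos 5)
Your scheme --- reduce to the similarity profile $V_{\al}$, prove uniform-in-$\al$ bounds, extract subsequential limits by Arzel\`a--Ascoli, and identify the limit as the unique solution of a boundary-value problem --- is genuinely different from the paper's proof, which uses no compactness at all: it passes to the limit term by term in the series $\sum_{n}L^{n}_{c_{1},\al}G_{c_{1},\al}$ using the bounds $0\leq M_{n,\al}(p,y)\leq p^{2n-1}/(n-1)!$ and $0\leq G_{c_{1},\al}\leq c_{1}$, sums the limiting series in closed form to obtain the error-function profile (\ref{classe}), proves the uniform convergence by a direct three-case $\ep$-argument, and verifies the heat equation distributionally. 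Note also that the ``main obstacle'' you single out is not one: the identity $\frac{1}{\Gamma(1-\al)}\,\frac{\al}{2}B\left(\frac{\al}{2},1-\al\right)=\frac{\Gamma(1+\al/2)}{\Gamma(1-\al/2)}\leq 1$ absorbs the singular kernel and the degenerating prefactor simultaneously, uniformly in $\al$, and propagates through the iteration (\ref{selfdefMn}) with no integration by parts; this is exactly estimate (\ref{estiM})--(\ref{estisumM}) of the paper.

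The genuine gap is in your identification step, at the boundary condition $-W'(c_{1})=\frac{c_{1}}{2}$. For every $\al<1$ the solution satisfies the extra condition (\ref{kdt}), i.e. $V_{\al}'(c_{1})=0$, whereas the limit profile has $V_{1}'(c_{1}^{-})=-\frac{c_{1}}{2}\neq 0$. Hence $V_{\al}'$ cannot converge uniformly up to $\eta=c_{1}$, the family $\{V_{\al}'\}$ is not equicontinuous there, and Arzel\`a--Ascoli gives you no control of $W'$ at the endpoint. ``Passing to the limit in (\ref{frees})'' therefore requires interchanging the two limits $a\nearrow s(t)$ and $\al\nearrow 1$ in a regime where they do not commute --- this is precisely the boundary-layer phenomenon the paper emphasizes (the disappearance of the condition $u_{x}(s(t),t)=0$ at $\al=1$), and your proposal supplies no mechanism for it. Without a second condition at $\eta=c_{1}$, the limit problem $W''+\frac{\eta}{2}W'=0$, $W(c_{1})=0$ has a one-parameter family of solutions and the subsequential limits are not identified; the Dirichlet datum cannot substitute for it, since in Theorem~\ref{uniformly} it is $c_{1}$, not $\gamma$, that is held fixed. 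To close the gap you would need either an integrated or weak form of the Stefan condition that survives $\al\nearrow 1$, or the explicit computation of the limiting series --- which is what the paper does.
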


\section{Derivation of the model}

In this section we will prove Theorem \ref{derive}.

\begin{proof}[Proof of Theorem \ref{derive}]
In order to derive the system of equations from (\ref{clawmems}), we apply the principle of energy conservation to an arbitrary subset $V$ of the domain at time $t \in (0,t^{*})$. We will consider two cases.
\begin{itemize}
\item
If $V=(a,b)\ww (0,s(0))$,  then from (A1) we have $V \ww (0,s(t))$ for each $t\in (0,t^{*})$ and (\ref{clawmems}) gives
\[
\ddt \nk{ \int_{V} T(x,t)+1 dx}=\djma   T_{x}(b,t) - \djma T_{x}(a,t).
\]
Hence,
\[
  \int_{V} \ddt T(x,t) dx=\djma  T_{x}(b,t) - \djma T_{x}(a,t).
\]
We apply the fractional integral $\intfjmb$ with respect to the time variable to both sides of the identity and with a use of assumption (A2) and \cite[Theorem 2.4]{Samko} we arrive at
\[
\int_{V} \cabt T(x,t) dx = T_{x}(b,t) - T_{x}(a,t).
\]
By the fundamental theorem of calculus we obtain
\[
\int_{V} [ \cabt T(x,t) -  T_{xx}(x,t)]dx =0.
\]
Since $V \ww (0,s(0)) $ is arbitrary, we get
\eqq{\cabt T(x,t) -  T_{xx}(x,t) =0 \hd \hd \m{ for } \hd (x,t)\in (0,s(0))\times (0,t^{*}).}{a2es}
\item
If $V=(a,b)$, where $s(0)<a<s(t)<b$, then (\ref{clawmems}) has the form
\[
\ddt \nk{ \int_{a}^{s(t)} T(x,t)+1 dx} = q^{*}(a,t) = - \frac{1}{\Gamma(\al)} \ddt \int_{\sma}^{t} (t-\tau)^{\al-1}T_{x}(a,\tau) d \tau.
\]
Differentiating  the integral on the left hand side leads to
\[
\int_{a}^{s(t)} \ddt T(x,t) dx+\dot s(t)[T(s(t),t)+1] =  - \frac{1}{\Gamma(\al)} \ddt \int_{\sma}^{t} (t-\tau)^{\al-1}T_{x}(a,\tau) d \tau.
\]
Applying $T(s(t),t)=0$, we get
\eqq{
\int_{a}^{s(t)} \ddt T(x,t) dx+\dot s(t) =  - \frac{1}{\Gamma(\al)} \ddt \int_{\sma}^{t} (t-\tau)^{\al-1}T_{x}(a,\tau) d \tau.}{newaa}
If $a \nearrow s(t)$, then by the assumption $(A2)$ the first term vanishes and as a consequence we get (\ref{mnewabc}). Next, if we apply the operator $I^{1-\al}_{\sma}$ (defined in (\ref{defIala})) to both sides of (\ref{newaa}),  then  we obtain
\[
\frac{1}{\Gamma(1-\al)} \int_{\sma}^{t}(t-\tau)^{-\al} \int_{a}^{s(\tau)} \ddta T(x,\tau) dxd\tau+\frac{1}{\Gamma(1-\al)} \int_{\sma}^{t}(t-\tau)^{-\al} \dot s(\tau)d\tau
\]
\eqq{
=  - \frac{1}{\Gamma(\al)}\frac{1}{\Gamma(1-\al)} \int_{\sma}^{t}(t-\tau)^{-\al} \ddta \int_{\sma}^{\tau} (\tau-p)^{\al-1}T_{x}(a,p) dp d \tau.
}{dj}
We note that by the assumption (A2) we have $T_{x}(a, \cdot ) \in AC[s^{-1}(x),t^{*}]$ hence, applying \cite[Theorem 2.4]{Samko}, we may rewrite the right hand side of (\ref{dj}) as follows
\[
-I^{1-\al}_{\sma } \partial^{1-\al}_{\sma } T_{x}(a, t )(t) = -T_{x}(a,t).
\]
If we apply the Fubini theorem to the first term  in (\ref{dj}), then  we arrive at the identity
\eqq{
\int_{a}^{s(t)} \Dasmx  T(x,t)dx+ \frac{1}{\Gamma(1-\al)} \int_{\sma}^{t}(t-\tau)^{-\al} \dot s(\tau)d\tau = - T_{x}(a,t).
}{dc}
Applying the substitution $\tau = s^{-1}(x)$ we get
\[
\frac{1}{\Gamma(1-\al)} \int_{\sma}^{t}(t-\tau)^{-\al} \dot s(\tau)d\tau = \frac{1}{\Gamma(1-\al)} \int_{a}^{s(t)} (t-s^{-1}(x))^{-\al} dx.
\]
We expect that $T_{x}(\cdot, t)$ may admit singular behaviour near the phase change point. Thus, we proceed very carefully. We fix $\ep>0$ such that $a<s(t)-\ep$, then, by (A2) we have
\[
- T_{x}(a,t) = \int_{a}^{s(t)-\ep} T_{xx}(x,t)dx - T_{x}(s(t)-\ep,t).
\]
Making use of this identity in (\ref{dc}) we obtain
\[
\int_{a}^{s(t)-\ep} \nk{ \Dasmx   T(x,t)-T_{xx}(x,t)+\frac{1}{\Gamma(1-\al)} (t-s^{-1}(x))^{-\al} }dx
\]
\eqq{=-\int_{s(t)-\ep}^{s(t)} \nk{ \Dasmx   T(x,t)+\frac{1}{\Gamma(1-\al)} (t-s^{-1}(x))^{-\al} }dx -T_{x}(s(t)-\ep,t).  }{d3}
Let us choose arbitrary $\ti{a}$ such that $s(0)<\ti{a}<a$. Repeating the above calculations for $\ti{a}$ instead of $a$, we obtain that
\[
\int_{\ti{a}}^{s(t)-\ep} \nk{ \Dasmx   T(x,t)-T_{xx}(x,t)+\frac{1}{\Gamma(1-\al)} (t-s^{-1}(x))^{-\al} }dx
\]
\eqq{=-\int_{s(t)-\ep}^{s(t)} \nk{ \Dasmx   T(x,t)+\frac{1}{\Gamma(1-\al)} (t-s^{-1}(x))^{-\al} }dx -T_{x}(s(t)-\ep,t).  }{d33}
Subtracting the sides of (\ref{d3}) and (\ref{d33}) we arrive at
\eqq{\int_{\ti{a}}^{a} \nk{ \Dasmx   T(x,t)-T_{xx}(x,t)+\frac{1}{\Gamma(1-\al)} (t-s^{-1}(x))^{-\al} }dx =0 }{d12}
for arbitrary $a, \ti{a} \in (s(0),s(t)-\ep)$ hence, we may deduce that
\eqq{ \Dasmx   T(x,t)-T_{xx}(x,t)+\frac{1}{\Gamma(1-\al)} (t-s^{-1}(x))^{-\al} = 0 \hd \m{ for } \hd x\in (s(0), s(t)), }{d6}
i.e. (\ref{md8c}) is proven.

\no It remains to show (\ref{md9c}). From (\ref{d33}) and (\ref{d6}) we infer that
\[
0=-\int_{s(t)-\ep}^{s(t)} \nk{ \Dasmx   T(x,t)+\frac{1}{\Gamma(1-\al)} (t-s^{-1}(x))^{-\al} }dx -T_{x}(s(t)-\ep,t).
\]
In order to obtain additional information about $T_{x}(s(t),t)$, we employ further regularity assumptions.  Applying (A3) we immediately get
\eqq{ \lim_{\ep \rightarrow 0^{+}} \int_{s(t)-\ep}^{s(t)}(t-s^{-1}(x))^{-\al}dx=0 \hd
\m{and} \hd
\lim_{\ep \rightarrow 0^{+}} \int_{s(t)-\ep}^{s(t)}  \Dasmx   T(x,t)dx = 0.}{e2}

Making use of (\ref{e2}) we obtain
\eqq{\lim_{\ep \rightarrow 0^{+}} T_{x}(s(t)-\ep,t) = 0, }{e5}
hence, we arrive at (\ref{md9c}), which finishes the proof of Theorem~\ref{derive}.
\end{itemize}
\end{proof}

\section{Self-similar solution }
This section is devoted to the proof of Theorem \ref{samo}. The proof will be divided  into a few steps. At first we will proceed with formal calculations that will lead us to appropriate scaling.
We introduce parameters $a,b,c,\lambda >0$ and we define the function
\eqq{\ul (x,t) = \lac u(\laa x , \lab t).}{lad}
Our aim is to find $a,b,c$ and the curve $(s(t),t)$ such that, if $(u,s)$ is a solution to (\ref{lab}), then $\ul =u$.

\no At first, we perform calculations. We note that $u_{xx}(x,t)= \lamc \lamda \ul_{xx}(\lama x , \lamb t)$ and
\[
\gja \dasx u(x,t) = \int_{\smx}^{t} \ttaa u_{t}(x,\tau) d\tau = \lamc \lamb \int_{\smx}^{t} \ttaa \ul_{t}(\lama x , \lamb \tau) d\tau
\]
\[
= \lamc  \int_{\lamb \smx}^{ t \lamb } (t-\lab p )^{-\al} \ul_{t}(\lama x , p) dp=\lamc \lamba \int_{\lamb \smx}^{ t \lamb } (t\lamb - p )^{-\al} \ul_{t}(\lama x , p) dp
\]
\[
=\lamc \lamba \gja \da_{\lamb \smx} \ul (\lama x , \lamb t),
\]
i.e.
\[
\da_{\sm (\laa x)} u (\laa x , \lab x ) = \lamc \lamba \da_{\lamb \sm (\laa x)}\ul (x,t).
\]
Hence, if the pair $(u,s)$ is a solution to (\ref{lab}), then
\[
0 = \da_{\sm(\laa x )} u (\laa x, \lab t ) - u_{xx}(\laa x , \lab t )+ \jgja (\lab t - \sm (\laa x ))^{-\al}
\]
\[
=\lamc \lamba \da_{\lamb \sm (\laa x)} \ul ( x ,  t) - \lamc \lamda \ul_{xx}( x , t )
+ \jgja \lamba ( t - \lamb \sm (\laa x ))^{-\al}.
\]
Thus, if we set $c=0$ and
 \eqq{b=\frac{2a}{\al},}{lae}
then we get
\[
0=  \da_{\lamb \sm (\laa x )} \ul ( x ,  t) - \ul_{xx}( x , t ) +  \jgja  ( t - \lamb \sm (\laa x ))^{-\al}.
\]
We observe that, if $s(t)$ satisfies
\eqq{\smx = \lamb \sm (\laa x ),}{laf}
then $u$ and $\ul$ are the solutions to the same equation. From the identity (\ref{lae}) we infer that $\smx = \la^{-\frac{2a}{\al}} \sm (\laa x )$. Hence, the function $s^{-1}$ fulfills the functional equation  $g(\la x ) = \la^{\frac{2}{\al} } g(x)$. To solve this equation, it is enough to write
\[
\frac{g(x)-g(\la x )}{x(1-\la )} = \frac{g(x)}{x} \frac{1-\la^{\frac{2}{\al}}}{1-\la}
\]
and take the limit $\la \rightarrow 1$. Then we get that $g'=\frac{2}{\al} \frac{g}{x}$, i.e. $g(x) = cx^{\frac{2}{\al}}$. Thus, we obtained that, if there exists a self-similar solution, then the phase interface may have a form
\eqq{
s(t) =c_{1}t^{\frac{\al}{2}}
}{sself}
for some positive $c_{1}$.
If we denote
\eqq{c_{0} = \cjmda,}{cjczself}
then we may write
\eqq{\smx = c_{0} \xda.}{fsm}
Our aim is to find a special solution $u$ to the system (\ref{lab}), (\ref{kds}), (\ref{frees}),  when function $s$ is given by (\ref{sself}). We will proceed as follows. At first, we will rewrite the equations (\ref{lab}), (\ref{frees}) in terms of a new self-similar solution. Subsequently, we will show that in this setting, assuming appropriate regularity of $u$, condition (\ref{frees}) implies $u_{x}(s(t),t) =0$. Then, we will solve the problem
\eqq{\dasx u(x,t) = u_{xx}(x,t) - \jgja (t - \smx )^{-\al} \hd \m{ in } \hd U,}{labf}
\[
u(s(t),t) = 0, \hd u_{x}(s(t),t)=0 \hd \m{ for } \hd t>0,
\]
with $s$  given by (\ref{sself}). Then, we will show that the solution satisfies (\ref{frees}).
In the final section, we will prove that the obtained solution is positive and that for every $\gamma > 0$ we may find $c_{1} > 0$ such that the obtained solution satisfies Dirichlet boundary condition $u(0,t) = \gamma$.
\subsection{Similarity variable}
Let us begin with introducing a similarity variable
\eqq{\xi = t \xmda.}{defxi}
We define function $f$ as follows
\eqq{f(\xi)=f(t\xmda ):= u(x,t).}{deff}

In the next proposition we establish how the expected regularity properties of $u$ transforms to the properties of $f$. Furthermore, we will rewrite the conditions (\ref{labf}), (\ref{kds}), (\ref{frees}) in terms of $f$ and prove that (\ref{frees}) implies vanishing of derivative of $f$ in point $c_{0}$.
\begin{prop}
Let us assume that $s$ is given by (\ref{sself}) with fixed $c_{1} > 0$ and  $u$ is a self-similar solution to (\ref{lab}), (\ref{kds}), (\ref{frees}), where the similarity variable is given by (\ref{defxi}). Suppose that $u$ has following regularity.
For  $k > 1$ and every $t>0$ there hold  $u_{x}(\cdot , t)\in L^{1}(0, s(t))$,  $u_{xx}(\cdot , t)\in L^{1}(s(t)/k, s(t))$. Then, the function $f$ defined by (\ref{deff})  satisfies  $f'\in L^{1}(c_{0},\infty)\cap AC([c_{0}, k^{\frac{2}{\al}}c_{0}])$, $f\in C^{2}(c_{0}, k^{\frac{2}{\al}}c_{0})$ and for $\xi \in (c_{0}, k^{\frac{2}{\al}}c_{0})$ we have
\eqq{\jgja \icxi \xipal f'(p)dp = \fdak \xik f''(\xi ) + \dakda \xi f'(\xi ) - \xima, }{eqforf}
\eqq{f(c_{0})=0,  }{fini}
\eqq{
\fadk c_{0}^{-2} \ga  =\lim_{b \searrow c_{0}}  \frac{d}{db}  \left[ \int_{c_{0}}^{b} (b- p)^{\al -1 }   f'(p)   dp \right].
}{fxic}
The identity (\ref{eqforf}) together with regularity of $f$ implies
\eqq{\lim_{\xi \searrow c_{0}} (\xi-c_{0})^{\al}f''(\xi) = \fadk \frac{c_{0}^{-2}}{\gja }, }{fxid}
while from (\ref{fxic}) we deduce
\eqq{
f'(c_{0})=0.
}{fxidd}
\label{fprim}
\end{prop}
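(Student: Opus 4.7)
The plan is to substitute the self-similar ansatz $u(x,t)=f(\xi)$ with $\xi = t\xmda$ into each of the conditions (\ref{labf})--(\ref{frees}) and to translate the assumed integrability of $u_{x}$ and $u_{xx}$ through the change of variable $x\mapsto\xi$. For fixed $t>0$ this map is a smooth diffeomorphism of $(0,s(t))$ onto $(c_{0},\infty)$ sending $(s(t)/k,s(t))$ onto $(c_{0},k^{\frac{2}{\al}}c_{0})$, and the chain-rule identities
\[
u_{x}(x,t) = -\fda\,\xi x^{-1} f'(\xi),\qquad u_{xx}(x,t) = x^{-2}\bigl[\fdak\xik f''(\xi)+\dakda\,\xi f'(\xi)\bigr]
\]
let the $L^{1}$ assumptions on $u_{x},u_{xx}$ be read off as $f'\in L^{1}(c_{0},\infty)\cap AC([c_{0},k^{\frac{2}{\al}}c_{0}])$; the $C^{2}$-regularity of $f$ can then be bootstrapped from equation (\ref{eqforf}) once it is established.

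Next I would derive (\ref{eqforf}) by performing the change of variable $\eta=\tau x^{-\frac{2}{\al}}$ in the Caputo integral. Since $\sm(x)=c_{0}x^{\frac{2}{\al}}$ corresponds to $\eta=c_{0}$ and $t-\tau = x^{\frac{2}{\al}}(\xi-\eta)$, the Jacobian combines with $u_{\tau}(x,\tau)=x^{-\frac{2}{\al}}f'(\eta)$ to give $\dasx u(x,t) = x^{-2}\jgja\icxi\xipal f'(p)\,dp$. Plugging this together with the formula for $u_{xx}$ above into (\ref{labf}) and cancelling the common factor $x^{-2}$ produces (\ref{eqforf}). The condition (\ref{fini}) is then immediate, since $\xi|_{x=s(t)} = t\,s(t)^{-\frac{2}{\al}} = c_{0}$ and $u(s(t),t)=0$.

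The derivation of (\ref{fxic}) from (\ref{frees}) is more delicate. The substitution $\eta=\tau a^{-\frac{2}{\al}}$ together with $b:=t a^{-\frac{2}{\al}}$ rewrites the inner expression in (\ref{frees}) as $-\frac{2a}{\al}\int_{c_{0}}^{b}(b-\eta)^{\al-1}\eta f'(\eta)\,d\eta$, which carries an unwanted factor $\eta$. To match the form required in (\ref{fxic}) I would split $\eta=c_{0}+(\eta-c_{0})$; the remainder $\int_{c_{0}}^{b}(b-\eta)^{\al-1}(\eta-c_{0})f'(\eta)\,d\eta$ is $O((b-c_{0})^{\al+1})$, so its derivative in $b$ vanishes as $a\nearrow s(t)$ (equivalently $b\searrow c_{0}$), and only the principal contribution $c_{0}\,\frac{d}{db}\int_{c_{0}}^{b}(b-\eta)^{\al-1}f'(\eta)\,d\eta$ survives. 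Combining this with $\dot s(t)=\fad c_{1}t^{\fad-1}$ and $s(t)^{1-\frac{2}{\al}}=c_{1}^{1-\frac{2}{\al}}t^{\fad-1}$ yields exactly the coefficient $\fadk c_{0}^{-2}\ga$ in (\ref{fxic}).

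The remaining statements (\ref{fxid}) and (\ref{fxidd}) are limiting consequences. Passing $\xi\searrow c_{0}$ in (\ref{eqforf}), the integral and $\dakda\,\xi f'(\xi)$ tend to $0$ once $f'(c_{0})=0$ is known, so $\fdak\xik f''(\xi)\sim\xima$, which after multiplying by $(\xi-c_{0})^{\al}/(\fdak\xik)$ and using $\fadk=1/\fdak$ gives (\ref{fxid}). For (\ref{fxidd}) I would integrate by parts in (\ref{fxic}), which is allowed because $f'\in AC$, to obtain
\[
\frac{d}{db}\int_{c_{0}}^{b}(b-p)^{\al-1}f'(p)\,dp = (b-c_{0})^{\al-1}f'(c_{0})+\int_{c_{0}}^{b}(b-p)^{\al-1}f''(p)\,dp.
\]
Since $\al\in(0,1)$, the boundary term blows up unless $f'(c_{0})=0$, while the remaining integral tends to a finite limit thanks to the integrable singularity $(\xi-c_{0})^{-\al}$ of $f''$ predicted by (\ref{fxid}). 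The main obstacle is to justify this integration by parts together with the vanishing of the remainder used in the derivation of (\ref{fxic}) despite the fact that $f''$ may blow up at $c_{0}$; once this is secured, (\ref{fxic}) forces $f'(c_{0})=0$, which in turn validates the a priori assumption used when passing to the limit to obtain (\ref{fxid}).
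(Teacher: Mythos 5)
Your derivation of (\ref{eqforf}), (\ref{fini}) and (\ref{fxic}) follows essentially the same route as the paper: the substitution $\eta=\tau x^{-2/\al}$ in the fractional integrals, the chain-rule identities for $u_{x}$, $u_{xx}$, and the translation of the $L^{1}$ hypotheses into $f'\in L^{1}(c_{0},\infty)\cap AC([c_{0},k^{2/\al}c_{0}])$. (For (\ref{fxic}) the paper integrates by parts in $p$ and tracks the boundary term $c_{0}f'(c_{0})(b-c_{0})^{\al-1}$ explicitly, whereas you split $\eta=c_{0}+(\eta-c_{0})$; both reduce to the identity (\ref{fxie}). Your one-line claim that the remainder is ``$O((b-c_{0})^{\al+1})$, so its derivative vanishes'' is not a valid inference as stated -- a bound on a function does not bound its derivative -- and needs the same explicit computation with $f'(\eta)+(\eta-c_{0})f''(\eta)$ that the paper carries out.)

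The genuine gap is in your treatment of (\ref{fxid})--(\ref{fxidd}), and you flag it yourself without resolving it: as written, your proof of (\ref{fxidd}) uses (\ref{fxid}) to make $\int_{c_{0}}^{b}(b-p)^{\al-1}f''(p)\,dp$ converge, while your proof of (\ref{fxid}) assumes $f'(c_{0})=0$, i.e.\ (\ref{fxidd}). The circle must be broken, and it can be, because (\ref{fxid}) does not require $f'(c_{0})=0$ at all: multiplying (\ref{eqforf}) by $(\xi-c_{0})^{\al}$, the terms $(\xi-c_{0})^{\al}\int_{c_{0}}^{\xi}(\xi-p)^{-\al}f'(p)\,dp$ and $(\xi-c_{0})^{\al}\xi f'(\xi)$ tend to zero using only the boundedness of $f'$ near $c_{0}$ (guaranteed by absolute continuity), which is exactly how the paper obtains (\ref{fxid}). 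With that fixed, your integration-by-parts argument for (\ref{fxidd}) can be completed. Note, however, that the paper proves (\ref{fxidd}) by a different and cleaner mechanism that avoids $f''$ entirely: it reads (\ref{fxic}) as $\partial^{1-\al}_{c_{0}}f'(b)\rightarrow(\al/(2c_{0}))^{2}$, sandwiches $\partial^{1-\al}_{c_{0}}f'$ between constants on a small interval, applies the order-preserving operator $I^{1-\al}_{c_{0}}$ together with $I^{1-\al}_{c_{0}}\partial^{1-\al}_{c_{0}}f'=f'$, and deduces the precise asymptotics $f'(x)\sim(\al/(2c_{0}))^{2}(x-c_{0})^{1-\al}/\Gamma(2-\al)$, from which $f'(c_{0})=0$ is immediate.
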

\begin{proof}
Let us begin with a simple calculation,
\eqq{u_{t}(x, \tau )= f'(\tau \xmda)\xmda,}{fxit}
\eqq{u_{x}(x,t)=-\fda f'(t\xmda)  t x^{-\fda -1 }, }{fxix}
\eqq{u_{xx}(x,t)=\fdak f''(t\xmda) ( t x^{-\frac{2}{\al}})^{2} x^{-2}+\fda(\fda +1) f'(t\xmda)  (t x^{-\fda  })x^{-2}.   }{fxixx}

Applying the substitution $p=\tau \xmda$ we get
\[
\dasx u(x,t)  = \jgja \int_{c_{0}\xda }^{t} (t-\tau)^{-\al} f'(\tau \xmda )\xmda d\tau
\]
\[
=\jgja \int_{c_{0}}^{t\xmda }(t-\xda p )^{-\al}f'(p) dp = x^{-2}\jgja \int_{c_{0}}^{t\xmda }(t\xmda - p )^{-\al}f'(p) dp.
\]
Furthermore, we have
\[
(t- c_{0}\xda)^{-\al} = x^{-2} (t\xmda - c_{0})^{-\al}.
\]

Applying these results in equation (\ref{lab}) with $s$ given by (\ref{sself}), we obtain (\ref{eqforf}).
To show that (\ref{fini}) holds, it is enough to notice that, since the function $u$ vanishes on the free boundary, we have
\[
0 =u(s(t), t) = u (c_{1} t^{\fad}, t ) = f(c_{0}),
\]
where we used (\ref{cjczself}). Now, we will prove the regularity results.
By (\ref{fxix}) we get
\eqq{
\infty>\int_{0}^{s(t)} |u_{x}(x,t)|dx =\fda\int_{0}^{s(t)} |f'(t\xmda)|  t x^{-\fda -1 }dx = \int_{c_{0}}^{\infty} |f'(\xi)|d\xi.
}{fxint}
From (\ref{fxixx}) we obtain in the similar way that
\[
\infty>\int_{s(t)/k}^{s(t)} |u_{xx}(x,t)|dx =\int_{s(t)/k}^{s(t)} \left| \fdak f''(t\xmda) ( t x^{-\frac{2}{\al}})^{2} x^{-2}+\fda(\fda +1) f'(t\xmda)  (t x^{-\fda  })x^{-2}\right| dx
\]
\[
= \int_{c_{0}}^{k^{\frac{2}{\al}}c_{0}} \left| \fda f''(\xi )\xi^{1+ \fad} t^{-\fad} + (\fda + 1 )f'(\xi) \xi^{\fad} t^{- \fad} \right| d\xi
\]
\[
\geq   \fda c^{1+\fad }_{0} t^{- \fad} \int_{c_{0}}^{k^{\frac{2}{\al}}c_{0}} \left|  f''(\xi ) \right| d\xi  - (\fda + 1 )k c_{0}^{\fad} t^{-\fad} \int_{c_{0}}^{\infty} |f'(\xi)|d\xi \m{ for every } t>0
\]
and as a consequence we obtain
\eqq{\int_{c_{0}}^{k^{\frac{2}{\al}}c_{0}} \left|  f''(\xi ) \right| d\xi <\infty. }{fxxint}
The estimates (\ref{fxint}) and (\ref{fxxint}) lead to $f'\in AC([c_{0}, k^{\frac{2}{\al}}c_{0}])$. Making use of the absolute continuity of $f'$ in identity (\ref{eqforf}) we deduce that $f\in C^{2}(c_{0}, k^{\frac{2}{\al}}c_{0})$. Hence, we obtained postulated regularity results.
Now, we shall rewrite the condition (\ref{frees}) in terms of the function $f$. We will show that it leads to (\ref{fxic}). Let us fix $a\in (s(t)/k, s(t))$. Applying the substitution $p= a^{-\fda} \tau $ we get that
\[
A\equiv\ddt \left[ \int_{\sm (a)}^{t} (t - \tau)^{\al - 1} u_{x}(a, \tau ) d\tau   \right] = - \fda \ddt  \left[ \int_{c_{0}a^{\fda}}^{t} (t - \tau )^{\al-1} f'(\tau a^{- \fda}) \tau a^{-\fda -1} d\tau  \right]
\]
\[
=- \fda a^{\fda - 1} \ddt \left[ \int_{c_{0}}^{ta^{-\fda  }} (t - a^{\fda }p)^{\al -1 } p f'(p) dp  \right] = - \fda a \ddt \left[ \int_{c_{0}}^{ta^{-\fda  }} (ta^{-\fda} - p)^{\al -1 } p f'(p) dp  \right].
\]
After integrating by parts we obtain
\[
A \equiv - \fda a \ddt \left[ \int_{c_{0}}^{ta^{-\fda  }} \frac{(t  a^{-\fda }- p)^{\al  }}{\al} \left( f'(p)+ p f''(p)   \right) dp + \frac{(t  a^{-\fda }- c_{0})^{\al  }}{\al} c_{0}f'(c_{0})  \right].
\]
By the continuity of second derivatives of $f$ in $(c_{0}, k^{\frac{2}{\al}}c_{0}) $ we obtain
\[
\lim_{p \nearrow ta^{-\fda}}  \frac{(t  a^{-\fda }- p)^{\al  }}{\al} \left( f'(p)+ p f''(p)   \right)=0.
\]
Therefore, we obtain
\[
A= - \fda a^{1-\fda}  \left[ \int_{c_{0}}^{ta^{-\fda  }} (t  a^{-\fda }- p)^{\al -1 } \left( f'(p)+ p f''(p)   \right) dp + (t  a^{-\fda }- c_{0})^{\al -1 }  c_{0}f'(c_{0}) \right].
\]
Since $f' \in AC([c_{0},k^{\frac{2}{\al}}c_{0}])$ we get
\[
 \lim_{a \nearrow s(t)}\abs{ \int_{c_{0}}^{ta^{-\fda  }} (t  a^{-\fda }- p)^{\al -1 } f'(p) dp} \leq \sup_{p \in [c_{0},k^{\frac{2}{\al}}c_{0}]} \abs{f'(p)}\lim_{a \nearrow s(t)} \frac{1}{\al}(ta^{-\frac{2}{\al}}-c_{0})=0.
\]
Applying these results together with (\ref{sself}) in (\ref{frees}) we obtain that
\eqq{
\fad c_{1} t^{ \fad-1}  =\jga \fda c_{1}^{1-\fda}t^{\fad-1}\lim_{a \nearrow s(t)}    \left[ \int_{c_{0}}^{ta^{-\fda  }} (t  a^{-\fda }- p)^{\al -1 }  p f''(p)   dp + (t  a^{-\fda }- c_{0})^{\al -1 }  c_{0}f'(c_{0}) \right].
}{fxia}
We note that
\[
\int_{c_{0}}^{ta^{-\fda  }} (t  a^{-\fda }- p)^{\al -1 }pf''(p )dp = - \int_{c_{0}}^{ta^{-\fda  }} (t  a^{-\fda }- p)^{\al }  f''(p )dp + t  a^{-\fda }\int_{c_{0}}^{ta^{-\fda  }} (t  a^{-\fda }- p)^{\al-1 }  f''(p )dp.
\]
Moreover,
\[
\lim_{a \nearrow s(t)} \abs{\int_{c_{0}}^{ta^{-\fda  }} (t  a^{-\fda }- p)^{\al }  f''(p )dp} \leq
\lim_{a \nearrow s(t)}(t  a^{-\fda }- c_{0})^{\al }\int_{c_{0}}^{ta^{-\fda  }}\abs{f''(p)}dp = 0.
\]
Making use of this convergence in (\ref{fxia}), we obtain
\[
\fadk c_{1}^{\fda} \ga  =c_{0}\lim_{a \nearrow s(t)}    \left[ \int_{c_{0}}^{ta^{-\fda  }} (t  a^{-\fda }- p)^{\al -1 }   f''(p)   dp + (t  a^{-\fda }- c_{0})^{\al -1 } f'(c_{0}) \right],
\]
i.e.
\[
\fadk c_{0}^{-2} \ga  =\lim_{b \searrow c_{0}}  \frac{d}{db}  \left[ \int_{c_{0}}^{b} (b- p)^{\al -1 }   f'(p)   dp \right],
\]
where we applied the equality
\eqq{ \int_{c_{0}}^{b} (b- p)^{\al -1 }   f''(p)   dp  = \frac{d}{db}  \left[ \int_{c_{0}}^{b} (b- p)^{\al -1 }   f'(p)   dp \right] -(b-c_{0})^{\al-1}f'(c_{0}).}{fxie}
Thus, we arrive at (\ref{fxic}).
To prove (\ref{fxid}), we notice that from the equation (\ref{eqforf}) we get
\[
 \fdak (\xi - c_{0})^{\al} \xik f''(\xi ) = \frac{(\xi - c_{0})^{\al}  }{\gja} \icxi \xipal f'(p)dp  - \dakda (\xi - c_{0})^{\al}  \xi f'(\xi ) + \jgja.
\]
The function $f'$  is absolutely continuous on some neighborhood of $c_{0}$ thus, taking the limit at $\xi = c_{0}$ we obtain (\ref{fxid}).

\no It remains to show that (\ref{fxic}) implies $f'(c_{0})=0$. We note that
\[
\frac{d}{d b}\int_{c_{0}}^{b}(b-p)^{\al-1}f'(p)dp = \Gamma(\al)\partial^{1-\al}_{c_{0}}f'(b).
\]
We fix $\varepsilon > 0$. Then, from (\ref{fxic}), there exists $x_{0} > c_{0}$ such that for every $x \in (c_{0},x_{0})$
\[
\left(\frac{\al}{2 c_{0}}\right)^{2} - \varepsilon \leq \partial^{1-\al}_{c_{0}}f'(x) \leq \left(\frac{\al}{2 c_{0}}\right)^{2} + \varepsilon.
\]
We note that, since $f'$ is absolutely continuous we have $I^{1-\al}_{c_{0}}\partial^{1-\al}_{c_{0}}f' = f'$. Applying $I^{1-\al}_{c_{0}}$ to the above inequalities we obtain that for every $x \in (c_{0},x_{0})$
\[
\left[\left(\frac{\al}{2 c_{0}}\right)^{2} - \varepsilon\right]\frac{(x-c_{0})^{1-\al}}{\Gamma(2-\al)} \leq f'(x) \leq \left[\left(\frac{\al}{2 c_{0}}\right)^{2} + \varepsilon\right]\frac{(x-c_{0})^{1-\al}}{\Gamma(2-\al)},
\]
hence for every $x \in (c_{0},x_{0})$
\[
\left(\frac{\al}{2 c_{0}}\right)^{2} - \varepsilon \leq f'(x)(x-c_{0})^{\al-1}\Gamma(2-\al) \leq \left(\frac{\al}{2 c_{0}}\right)^{2} + \varepsilon.
\]
The last pair of inequalities is equivalent with
\[
\lim_{x\rightarrow c_{0}}\frac{f'(x)}{(x-c_{0})^{1-\al}} = \left(\frac{\al}{2 c_{0}}\right)^{2}\frac{1}{\Gamma(2-\al)}
\]
and in particular $f'(c_{0})= 0$. This way we finished the proof of Proposition \ref{fprim}.
\end{proof}
We note that, the converse statement also holds. Reverting the calculations, we obtain the following result.

\begin{corollary}\label{zefu}
Assume that  $k > 1$, $c_{0} > 0$  and  function $f$ is such that $f'\in AC([c_{0}, k^{\frac{2}{\al}}c_{0}])$, $f\in C^{2}(c_{0}, k^{\frac{2}{\al}}c_{0})$ and for $\xi \in (c_{0}, k^{\frac{2}{\al}}c_{0})$ the equality (\ref{eqforf}) holds. Then $u(x,t):=f(tx^{-\fda})$ satisfies
\[
\dasx u(x,t) = u_{xx}(x,t) - \jgja (t - \smx )^{-\al} \hd \m{ for   } \hd  s(t)/k<x<s(t), \hd 0<t,
\]
where $s(t)$ is defined in (\ref{sself}) with $c_{1}$ given by (\ref{cjczself}). Furthermore, for every $t~>~0$ there holds  $u_{x}(\cdot , t) \in W^{1,1}(s(t)/k, s(t))$ and   $u_{t}(x,\cdot ) \in AC([\sm(x), \sm(kx)])$ for every $x>0$. If in addition $f$ satisfies (\ref{fini}), then $u(s(t),t)=0$. Moreover, if $f$ satisfies (\ref{fxic})  then $u$ fulfills (\ref{frees}). As a consequence of (\ref{eqforf}) and (\ref{fxic}), (\ref{fxid}) and (\ref{fxidd}) hold and then $u_{x}(s(t),t)=0$.
\end{corollary}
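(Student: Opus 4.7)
The plan is essentially to reverse every step carried out in the proof of Proposition~\ref{fprim}. Since the hypotheses on $f$ match the regularity conclusions obtained there, and equation~(\ref{eqforf}) matches the transformed version of~(\ref{lab}), each computation that was used in the forward direction can be read backwards.

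First, I would set $u(x,t):=f(tx^{-2/\al})$ and compute $u_t$, $u_x$, $u_{xx}$ by the chain rule, obtaining exactly the formulas~(\ref{fxit})--(\ref{fxixx}). The absolute continuity of $f'$ on $[c_0,k^{2/\al}c_0]$ and the $C^2$ regularity of $f$ on the open interval transfer, via these pointwise formulas and the substitution $\xi=tx^{-2/\al}$, to the claimed regularity $u_x(\cdot,t)\in W^{1,1}(s(t)/k,s(t))$ for every $t>0$ and $u_t(x,\cdot)\in AC([s^{-1}(x),s^{-1}(kx)])$ for every $x>0$; this is a direct change-of-variables computation of the kind already done in~(\ref{fxint})--(\ref{fxxint}). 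Next, applying the substitution $p=\tau x^{-2/\al}$ in the definition of $D^{\al}_{s^{-1}(x)}u(x,t)$ and using (\ref{fxit}) I recover
\[
D^{\al}_{s^{-1}(x)}u(x,t)=\frac{x^{-2}}{\Gamma(1-\al)}\int_{c_0}^{tx^{-2/\al}}(tx^{-2/\al}-p)^{-\al}f'(p)\,dp,
\]
while $(t-s^{-1}(x))^{-\al}=x^{-2}(tx^{-2/\al}-c_0)^{-\al}$. Substituting these together with (\ref{fxixx}) into the left hand side of~(\ref{lab}) and factoring out $x^{-2}$ reduces the PDE to exactly identity~(\ref{eqforf}) at $\xi=tx^{-2/\al}$, which holds by hypothesis.

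The condition $u(s(t),t)=f(c_0)=0$ is immediate once $f(c_0)=0$ is assumed. For the Stefan condition~(\ref{frees}), I would reverse the chain of manipulations that led from~(\ref{frees}) to~(\ref{fxic}) in the proof of Proposition~\ref{fprim}: starting from~(\ref{fxic}), I rewrite the limit in~(\ref{fxic}) by means of~(\ref{fxie}) in terms of $f''$, multiply and divide by the appropriate powers of $c_1$, $a$ and $t$, and apply the substitutions $\tau=a^{2/\al}p$, $p=\tau a^{-2/\al}$ backwards. Each individual identity used in the forward direction (integration by parts, the vanishing of the boundary term $\frac{(ta^{-2/\al}-p)^\al}{\al}(f'(p)+pf''(p))$ at the upper endpoint, and the vanishing of $\int_{c_0}^{ta^{-2/\al}}(ta^{-2/\al}-p)^{\al-1}f'(p)\,dp$ and of $\int_{c_0}^{ta^{-2/\al}}(ta^{-2/\al}-p)^{\al}f''(p)\,dp$ as $a\nearrow s(t)$) is an equality or a convergence that only relies on the regularity already established, so each step runs in both directions.

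Finally, the derivations of~(\ref{fxid}) and~(\ref{fxidd}) given at the end of the proof of Proposition~\ref{fprim} use only~(\ref{eqforf}) and~(\ref{fxic}) together with absolute continuity of $f'$ near $c_0$, so they apply verbatim. Once $f'(c_0)=0$ is in hand, formula~(\ref{fxix}) evaluated at $x=s(t)=c_1 t^{\al/2}$, i.e.\ at $\xi=c_0$, yields $u_x(s(t),t)=0$. The only point that requires real care (and is the main obstacle) is the reversal of the free-boundary step: one must check that the limit as $a\nearrow s(t)$ in the backward direction is actually attained, i.e.\ that the auxiliary integrals which vanished in the forward computation still vanish when~(\ref{fxic}) is the starting point; this however follows at once from $f''\in L^1(c_0,k^{2/\al}c_0)$, which is part of the assumed regularity of $f$.
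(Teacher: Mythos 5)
Your proposal is correct and follows essentially the same route as the paper, which proves Corollary~\ref{zefu} precisely by ``reverting the calculations'' of Proposition~\ref{fprim}; you correctly identify that every step there is an identity or a convergence relying only on the assumed regularity of $f$, and that the only delicate point --- the vanishing of the auxiliary integrals in the limit $a\nearrow s(t)$ --- is supplied by $f''\in L^{1}$ near $c_{0}$, which follows from $f'\in AC$.
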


\subsection{Existence of solution}

Now, we shall find the solution to the problem (\ref{eqforf})-(\ref{fxic}). As it was proven in the previous section, if the solution exists, then it also satisfies (\ref{fxidd}) so, it is convenient to consider the space
\[
X_{R}:= \{f\in C^{1}([c_{0}, R]): \hd f(c_{0})=f'(c_{0})=0 \},
\]
for $R\in (c_{0}, \infty)$. Firstly, we transform the equation (\ref{eqforf}) into the weaker form and we obtain the existence of the solution to the transformed equation in the space $X_{R}$.

Let us apply the integral $\icz$  to both sides of (\ref{eqforf})
\[
\ida f'(\xi ) = \fdak \inczxi \tau^{2} f''(\tau ) d \tau  + \dakda \inczxi \tau f'(\tau )d\tau  - \frac{(\xi - c_{0})^{1- \al }}{\Gamma(2-\al)}.
\]
If  we integrate by parts and  take into account that $f(c_{0})=0$, $f'(c_{0})=0$, then  we obtain
\[
\ija f(\xi ) = \dakmda \inczxi  f(\tau ) d \tau  - \dakmda  \xi f(\xi )+\fdak \xi^{2} f'(\xi)  - \frac{(\xi - c_{0})^{1- \al }}{\Gamma(2-\al)}.
\]
We apply again $\icz$ to both sides and integrate by parts to get
\[
\ida f(\xi ) = \dakmda \iczk  f(\xi )   - \tdakmda \inczxi \tau f(\tau ) d\tau +\fdak \xi^{2} f(\xi)  - \frac{(\xi - c_{0})^{2- \al }}{\Gamma(3-\al)}.
\]
The above equality has the following form
\eqq{f(\xi) = Kf(\xi ) + g(\xi ), }{ffred}
where
\[
Kf(\xi) = \fadk \ximk  \ida f(\xi ) + \left[\fad - 1  \right]\ximk  \iczk  f(\xi )   + \left[  3- \fad \right]\ximk \inczxi \tau f(\tau ) d\tau
\]
and
\[
g(\xi ) = \fadk \ximk \frac{(\xi - c_{0})^{2- \al }}{\Gamma(3-\al)}.
\]

\begin{prop}
Assume that $R\in (c_{0}, \infty)$. Then there exists the unique $f\in X_{R}$ solution to (\ref{ffred}).
Furthermore, the obtained solution belongs to $C^{2}((c_{0},R))$ and it satisfies (\ref{eqforf}) on~ $(c_{0},R)$.
\label{existf}
\end{prop}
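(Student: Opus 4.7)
The plan is to reformulate \eqref{ffred} as a linear fixed-point problem $f=Kf+g$ on the Banach space $X_{R}$ (a closed subspace of $C^{1}([c_{0},R])$ with the $C^{1}$-norm) and to apply Banach's theorem to a sufficiently high iterate of $K$. First, I observe that $g\in X_{R}$: since $g(\xi)=\fadk\xi^{-2}(\xi-c_{0})^{2-\al}/\Gamma(3-\al)$ and $2-\al>1$, one has $g\in C^{1}([c_{0},R])$ together with $g(c_{0})=g'(c_{0})=0$. Next, I verify that $K(X_{R})\subset X_{R}$. Because $c_{0}>0$, the factor $\xi^{-2}$ is smooth on $[c_{0},R]$, so it suffices to examine the three Volterra integrals appearing in $K$. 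For $f\in X_{R}$, both $\int_{c_{0}}^{\xi}\tau f(\tau)d\tau$ and $I^{2}_{c_{0}}f$ are manifestly $C^{1}$ and vanish together with their first derivatives at $c_{0}$ because $f(c_{0})=0$. For $I^{2-\al}_{c_{0}}f$ I integrate by parts once (justified by $f(c_{0})=0$) to rewrite it as $\frac{1}{\Gamma(3-\al)}\int_{c_{0}}^{\xi}(\xi-\tau)^{2-\al}f'(\tau)d\tau$; its derivative $\frac{1}{\Gamma(2-\al)}\int_{c_{0}}^{\xi}(\xi-\tau)^{1-\al}f'(\tau)d\tau$ is continuous on $[c_{0},R]$ and vanishes at $c_{0}$ thanks to $f'(c_{0})=0$. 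Hence $Kf\in X_{R}$.

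The main obstacle is the contraction estimate, since a single application of $K$ need not contract on $[c_{0},R]$ when $R$ and the coefficients are of arbitrary size. My plan is to pass to an iterate. Each of the three building-block Volterra operators $T$ satisfies a standard bound of the form $\|T^{n}f\|_{\infty,[c_{0},R]}\leq C^{n}(R-c_{0})^{n\gamma_{T}}/\Gamma(1+n\gamma_{T})\cdot\|f\|_{\infty}$ with $\gamma_{T}\in\{1,2-\al,2\}$, and analogous bounds hold for the derivative of $K^{n}f$ after differentiating through the convolution structure. Summing over the three operators and using the super-exponential decay of $1/\Gamma(1+n\gamma)$ yields $\|K^{n}\|_{X_{R}\to X_{R}}\to 0$, so $K^{n_{0}}$ is a contraction for some $n_{0}$. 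Banach's theorem applied to $K^{n_{0}}$ then produces a unique fixed point $f$ of $K$ in $X_{R}$, which is the unique solution of \eqref{ffred}.

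Finally I upgrade the regularity and recover \eqref{eqforf}. Given $f\in X_{R}\subset C^{1}([c_{0},R])$, each of the three Volterra integrals of $f$ is $C^{2}$ on the open interval $(c_{0},R)$ (the weakly singular behaviour of $\frac{d}{d\xi}I^{2-\al}_{c_{0}}f$ is confined to the left endpoint $c_{0}$), and $\xi^{-2}$ is smooth on $[c_{0},R]$, so $f=Kf+g\in C^{2}((c_{0},R))$. Differentiating $f=Kf+g$ twice on $(c_{0},R)$ exactly reverses the two applications of $I_{c_{0}}$ through which \eqref{ffred} was obtained from \eqref{eqforf}, which recovers \eqref{eqforf} pointwise on $(c_{0},R)$ and completes the proof.
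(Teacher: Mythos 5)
Your proposal is correct, but the core existence--uniqueness step runs along a genuinely different route from the paper's. The paper notes that $K$ is linear and bounded on $X_{R}$, uses the Arzel\`a--Ascoli theorem to show $K$ is compact, invokes the Fredholm alternative to reduce solvability of $f=Kf+g$ to uniqueness for the homogeneous equation, and settles the latter via the pointwise estimate $|Kf(\xi)|\leq C\int_{c_{0}}^{\xi}|f(\tau)|\,d\tau$ together with the Gronwall lemma. You instead exploit the Volterra structure directly: you show that some iterate $K^{n_{0}}$ is a contraction on $X_{R}$ (equivalently, that the Neumann series $\sum_{n}K^{n}g$ converges), which yields existence and uniqueness in one stroke, avoids Fredholm theory altogether, and is constructive --- in fact it foreshadows the series representation the authors derive later in Proposition 4. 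The one place where your write-up is looser than it should be is the iterate estimate: $K$ is a sum of three operators, so $K^{n}$ expands into $3^{n}$ mixed compositions, and the bound you quote for the $n$-th power of each individual building block does not directly control these mixed terms. The clean repair is exactly the paper's displayed inequality: since $|Kh(\xi)|\leq C\int_{c_{0}}^{\xi}|h(\tau)|\,d\tau$ with $C$ uniform on $[c_{0},R]$, induction gives $|K^{n}h(\xi)|\leq C^{n}(\xi-c_{0})^{n}\|h\|_{\infty}/n!$, and the companion bound $|(Kh)'(\xi)|\leq C\sup_{[c_{0},\xi]}|h|$ (each differentiated term loses only one order of integration, and $\xi^{-2}$ is smooth since $c_{0}>0$) then yields $\|K^{n}\|_{X_{R}\to X_{R}}\to 0$ in the $C^{1}$ norm. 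With that adjustment your argument is complete; your verification that $g\in X_{R}$ and $K(X_{R})\subset X_{R}$, and your $C^{2}$ upgrade by reversing the two integrations, match the paper's reasoning.
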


\begin{proof}
At first, we note that $g\in X_{R}$ and the operator $K$ is linear and bounded on $X_{R}$. After applying Arzeli-Ascoli theorem we deduce that $K$ is compact operator in $X_{R}$ hence, by Fredholm alternative the equation (\ref{ffred}) has the unique solution provided, the homogeneous equation has only one solution. Indeed, from the estimate
\[
|Kf(\xi)|\leq \left[ \fadk c_{0}^{-2} \frac{(\xi - c_{0})^{1-\al}}{\Gamma(2-\al)}+ (1-\fad) c_{0}^{-2} (\xi - c_{0})+  (3-\fad)c_{0}^{-1}  \right]\inczxi |f(\tau)|d\tau
\]
and Gronwall lemma we deduce that the only solution in $X_{R}$ of $f-Kf=0$ is $f\equiv 0$. Since the right hand side of (\ref{ffred}) belongs to $C^{2}((c_{0},R))$, then so does $f$. Hence, we may invert the calculations leading to identity (\ref{ffred}) and we obtain that $f$ satisfies (\ref{eqforf}) on $(c_{0},R)$.
\end{proof}

\begin{prop}
For every $R > 0$ there exists exactly one $f$ belonging to $C^{1}([c_{0},R])\cap C^{2}(c_{0},R)$ which satisfies the system (\ref{eqforf}) - (\ref{fxidd}).
\label{istf}
\end{prop}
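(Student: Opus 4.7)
The plan is to reduce the full system to the integral reformulation (\ref{ffred}) already solved in Proposition \ref{existf}, and then to verify that the solution furnished there automatically satisfies the two asymptotic conditions (\ref{fxid}) and (\ref{fxic}).

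Fix $R > c_{0}$. Invoking Proposition \ref{existf} I obtain a unique $f \in X_{R}$ solving (\ref{ffred}); by that proposition $f \in C^{2}((c_{0}, R))$ and satisfies (\ref{eqforf}) on $(c_{0}, R)$, while membership in $X_{R}$ furnishes (\ref{fini}) and (\ref{fxidd}) directly. Condition (\ref{fxid}) is then immediate from the closing calculation in the proof of Proposition \ref{fprim}: multiply (\ref{eqforf}) by $(\xi - c_{0})^{\al}$, use boundedness of $f'$ on $[c_{0}, R]$ to kill the integral term, and use $f'(c_{0}) = 0$ together with continuity of $f'$ to kill $\dakda\, \xi f'(\xi)(\xi - c_{0})^{\al}$; solving for the remaining term gives (\ref{fxid}).

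The substantive step is verifying (\ref{fxic}). I would set $F(b) := \int_{c_{0}}^{b}(b-p)^{\al-1}f'(p)\,dp$. By (\ref{fxid}) we have $f''(q) = O((q-c_{0})^{-\al})$, hence $f'' \in L^{1}_{\mathrm{loc}}$ near $c_{0}$, and combined with $f'(c_{0}) = 0$ this yields the representation $f'(p) = \int_{c_{0}}^{p} f''(q)\,dq$. A Fubini swap recasts $F$ as $\tfrac{1}{\al}\int_{c_{0}}^{b}(b-q)^{\al}f''(q)\,dq$, so differentiation gives $F'(b) = \int_{c_{0}}^{b}(b-q)^{\al-1}f''(q)\,dq$. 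Writing $g(q) := (q-c_{0})^{\al}f''(q)$ and substituting $q = c_{0} + (b-c_{0})u$ leads to
\[
F'(b) = \int_{0}^{1} u^{-\al}(1-u)^{\al-1}\, g(c_{0} + (b-c_{0})u)\, du.
\]
The delicate point, and the main obstacle, is that the kernel $u^{-\al}(1-u)^{\al-1}$ sits exactly at the edge of integrability at both endpoints; I would use the finite limit $g(c_{0}^{+}) = \fadk c_{0}^{-2}/\gja$ provided by (\ref{fxid}) to get a uniform bound $|g| \leq M$ on a small right-neighborhood of $c_{0}$, which supplies the dominating function $M u^{-\al}(1-u)^{\al-1}$ needed to apply dominated convergence. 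Passing $b \searrow c_{0}$ then produces
\[
\lim_{b \searrow c_{0}} F'(b) = g(c_{0}^{+}) \cdot B(1-\al,\al) = \frac{\fadk c_{0}^{-2}}{\gja}\,\gja\,\ga = \fadk c_{0}^{-2}\,\ga,
\]
which is precisely (\ref{fxic}).

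For uniqueness, any $\tilde f \in C^{1}([c_{0},R]) \cap C^{2}(c_{0},R)$ satisfying the full system automatically lies in $X_{R}$ because (\ref{fini}) and (\ref{fxidd}) are among its hypotheses; reversing the two integrations $\icz$ that led from (\ref{eqforf}) to (\ref{ffred}) shows $\tilde f$ solves (\ref{ffred}), and Proposition \ref{existf} forces $\tilde f = f$.
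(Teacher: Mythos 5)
Your proposal is correct and follows essentially the same route as the paper: reduce to Proposition \ref{existf}, observe that (\ref{fxid}) is an immediate consequence of the equation and the continuity of $f'$, and then deduce (\ref{fxic}) from the asymptotics of $f''$ encoded in (\ref{fxid}). The only difference is cosmetic: where you rescale $\int_{c_0}^{b}(b-q)^{\al-1}f''(q)\,dq$ to a Beta integral on $[0,1]$ and apply dominated convergence, the paper sandwiches $f''$ between multiples of $(\xi-c_0)^{-\al}$ and applies $\frac{1}{\Gamma(1-\al)}I^{\al}_{c_0}$ to the two-sided bounds, exploiting $I^{\al}_{c_0}\bigl[(\cdot-c_0)^{-\al}\bigr]\equiv\Gamma(1-\al)$ --- which is the same Beta-function computation in disguise.
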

\begin{proof}
It remains to show that the solution obtained in Proposition \ref{existf} satisfies (\ref{fxic}) and~(\ref{fxid}). We note that (\ref{fxid}) is a simple consequence of (\ref{eqforf}) and continuity of $f'$.
Let us show~(\ref{fxic}). We fix $\varepsilon > 0$. Then, by (\ref{fxid}) there exists $\xi_{0} > c_{0}$ such that for every $c_{0} <\xi < \xi_{0} $
\[
\left(\frac{\al}{2}\right)^{2}\frac{c_{0}^{-2}}{\Gamma(1-\al)} - \varepsilon \leq (\xi-c_{0})^{\al}f''(\xi) \leq \left(\frac{\al}{2}\right)^{2}\frac{c_{0}^{-2}}{\Gamma(1-\al)} + \varepsilon.
\]
Hence, for every $c_{0} <\xi < \xi_{0} $
\[
\left(\left(\frac{\al}{2}\right)^{2}\frac{c_{0}^{-2}}{\Gamma(1-\al)} - \varepsilon\right)(\xi-c_{0})^{-\al} \leq f''(\xi) \leq \left(\left(\frac{\al}{2}\right)^{2}\frac{c_{0}^{-2}}{\Gamma(1-\al)} + \varepsilon\right)(\xi-c_{0})^{-\al}.
\]
Applying $\frac{1}{\Gamma(1-\al)}I^{\al}_{c_{0}}$ to both these inequalities we obtain that for every $c_{0} <\xi < \xi_{0} $
\[
\left(\frac{\al}{2}\right)^{2}\frac{c_{0}^{-2}}{\Gamma(1-\al)} - \varepsilon \leq \frac{1}{\Gamma(1-\al)} I^{\al}_{c_{0}}f''(\xi) \leq \left(\frac{\al}{2}\right)^{2}\frac{c_{0}^{-2}}{\Gamma(1-\al)} + \varepsilon.
\]
Hence,
\[
I^{\al}_{c_{0}}f''(\xi) \rightarrow \left(\frac{\al}{2}\right)^{2} c_{0}^{-2} \m{ as } \xi \rightarrow c_{0}.
\]
If we recall that $f'(c_{0}) = 0$, then from (\ref{fxie}) we have
\[
\lim_{\xi\rightarrow c_{0}} \frac{d}{d \xi }\int_{c_{0}}^{\xi}(\xi-p)^{\al-1}f'(p)dp = \lim_{\xi\rightarrow c_{0}}\Gamma(\al)I^{\al}_{c_{0}}f''(\xi) = \Gamma(\al)\left(\frac{\al}{2}\right)^{2}c_{0}^{-2}
\]
and we arrive at (\ref{fxic}).
\end{proof}

From Corollary~\ref{zefu} and Proposition~\ref{istf} we deduce the following result.

\begin{corollary}\label{corodwa}
Let $f$ be the solution to (\ref{eqforf})-(\ref{fxidd}) given by Proposition~\ref{istf}. Then, for every $k \in (1,\infty)$ function $u(x,t) := f(tx^{-\fda})$ satisfies
\[
\dasx u(x,t) = u_{xx}(x,t) - \jgja (t - \smx )^{-\al} \hd \m{ for  } \hd  s(t)/k<x<s(t), \hd t>0,
\]
\[
u(s(t),t) =0,
\]
\[
\dot{s}(t) = - \jga  \lim_{a \nearrow s(t)} \ddt \left[ \int_{\sm (a)}^{t} (t - \tau)^{\al - 1} u_{x}(a, \tau ) d\tau   \right],
\]
\[
u_{x}(s(t),t)=0,
\]
where $s(t)$ is defined by (\ref{sself}) with $c_{1}$ given by (\ref{cjczself}). Furthermore, for every $t~>~0$ there hold  $u_{x}(\cdot , t)\in W^{1,1}(s(t)/k, s(t))$ and $u_{t}(x,\cdot ) \in AC([\sm(x), \sm(kx)])$ for every $x>0$.

\end{corollary}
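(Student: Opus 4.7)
The plan is to invoke Proposition~\ref{istf} and Corollary~\ref{zefu} in sequence; no genuinely new work is needed. First I would fix $k \in (1,\infty)$ and set $R = k^{2/\al}c_{0}$. Proposition~\ref{istf} then supplies a unique $f \in C^{1}([c_{0},R]) \cap C^{2}((c_{0},R))$ satisfying the system (\ref{eqforf})--(\ref{fxidd}) on this interval, and in particular $f(c_{0}) = f'(c_{0}) = 0$ together with the asymptotic (\ref{fxid}).

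The one hypothesis of Corollary~\ref{zefu} not handed to us directly by Proposition~\ref{istf} is the absolute continuity of $f'$ on $[c_{0}, R]$ (Proposition~\ref{istf} only asserts $f' \in C^{1}$ on the open interval plus continuity up to the endpoint). I expect this to be the only nontrivial verification, and it is short: from (\ref{fxid}) and $\al \in (0,1)$ the second derivative $f''(\xi)$ is comparable to $(\xi - c_{0})^{-\al}$ near $c_{0}$, while $f'' \in C((c_{0},R))$ on any compact subinterval away from $c_{0}$; combining the two gives $f'' \in L^{1}(c_{0}, R)$ and hence $f' \in AC([c_{0}, R])$. This is the step I would flag as the main (though mild) obstacle.

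With the hypotheses of Corollary~\ref{zefu} in hand, I would verify that the similarity variable $\xi = tx^{-2/\al}$ sweeps exactly the interval $[c_{0}, R]$ when $x$ ranges over $[s(t)/k, s(t)]$: using $s(t) = c_{1}t^{\al/2}$ and $c_{0} = c_{1}^{-2/\al}$, one computes $\xi|_{x=s(t)} = c_{0}$ and $\xi|_{x=s(t)/k} = k^{2/\al}c_{0} = R$. Defining $u(x,t) := f(tx^{-2/\al})$ and applying Corollary~\ref{zefu} then yields at one stroke all the listed conclusions: the fractional equation on the strip $s(t)/k < x < s(t)$, the boundary condition $u(s(t),t) = 0$, the free boundary relation (\ref{frees}), the additional condition $u_{x}(s(t),t) = 0$, and the regularity statements $u_{x}(\cdot,t) \in W^{1,1}(s(t)/k, s(t))$ and $u_{t}(x,\cdot) \in AC([s^{-1}(x), s^{-1}(kx)])$ for every $x > 0$. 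This completes the plan.
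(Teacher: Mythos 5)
Your proposal is correct and follows exactly the paper's route: the paper proves Corollary~\ref{corodwa} simply by combining Proposition~\ref{istf} with Corollary~\ref{zefu}. You in fact supply one detail the paper leaves implicit, namely upgrading $f\in C^{1}([c_{0},R])\cap C^{2}(c_{0},R)$ to $f'\in AC([c_{0},R])$ via the asymptotic (\ref{fxid}) (for integrability of $f''$ near the right endpoint $R$ one can appeal to uniqueness and work on a slightly larger interval), which is a legitimate and easily closed step.
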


Now, we shall examine the positivity of $u$ given in the above corollary. By (\ref{fxid}) and the equality
\[
f(\xi) = \inczxi (\xi - \tau )f''(\tau) d \tau
\]
we deduce that there exists $\xi_{0}\in (c_{0}, R)$ such that  $f(\xi)>0$ for $\xi \in (c_{0}, \xi_{0})$. In the next subsection we shall show that $f(\xi)>0$ for each $\xi >c_{0}$ and we determine the limit at infinity.

\subsection{Positivity of solution}

We note that it is not clear whether the function $u$ defined in Corollary \ref{corodwa} is positive, because it is not clear whether the function $f$ given in Proposition \ref{istf} is positive. To prove the positivity of $f$ we have to transform the equation (\ref{eqforf}).

\begin{prop}\label{pos}
The function $f$ given in Proposition~\ref{istf} is positive on $(c_{0}, \infty)$. Furthermore,
\eqq{
f(\xi) = \int_{\xi^{-\frac{\al}{2}}}^{c_{1}}\sum_{n=0}^{\infty}(L^{n}G(y))dy,
}{fwzor}
where the constants $c_{0}$ and $c_{1}$ are related by the formula (\ref{cjczself}) and
\eqq{
(Lh)(x):= \frac{1}{\Gamma(1-\al)} \int_{x}^{c_{1}}\int_{\mu}^{c_{1}}(1-p^{-\frac{2}{\al}}
\mu^{\frac{2}{\al}})^{-\al}h(p)dp d\mu,
}{Ldef}
\eqq{
G(x) = \frac{1}{\Gamma(1-\al)} \int_{x}^{c_{1}} (1-c_{0}\mu^{\frac{2}{\al}})^{-\al}d\mu.
}{Gdef}
The series converges uniformly on $[0,c_{1}]$. Moreover, if $F(\mu):=f(\mu^{-\fda})$, then $F\in C^{1}([0,c_{1}])$ and $F''\in L^{1}(0,c_{1})$.
\end{prop}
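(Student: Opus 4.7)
The plan is to introduce the self-similarity variable $\mu = \xi^{-\al/2}$, equivalently $F(\mu) := f(\mu^{-2/\al})$, in which (\ref{eqforf}) together with the boundary data $f(c_0) = f'(c_0) = 0$ becomes a Volterra-type integral equation on $[0, c_1]$ solvable by a Neumann series whose terms are manifestly non-negative. Substituting $\xi = \mu^{-2/\al}$ and $p = y^{-2/\al}$ in the fractional integral on the left of (\ref{eqforf}) and using $\xi - p = \mu^{-2/\al}(1-(\mu/y)^{2/\al})$ together with $f'(p)\,dp = -F'(y)\,dy$ (the Jacobian factors cancel against the powers of $\xi$ on the right), one reduces (\ref{eqforf}) to
\[
F''(\mu) = \frac{(1-(\mu/c_1)^{2/\al})^{-\al}}{\Gamma(1-\al)} - \frac{1}{\Gamma(1-\al)}\int_\mu^{c_1}(1-(\mu/y)^{2/\al})^{-\al}F'(y)\,dy,
\]
with $F(c_1) = 0$ coming from (\ref{fini}) and $F'(c_1) = 0$ coming from (\ref{fxidd}). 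Setting $h := -F'$ and integrating once from $\mu$ to $c_1$ (using $h(c_1) = 0$) turns this into the fixed-point equation $h = G + Lh$ on $[0, c_1]$ with $G$, $L$ exactly as in (\ref{Gdef}), (\ref{Ldef}).

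I would then establish uniform convergence of $\sum_{n \geq 0} L^n G$ on $[0, c_1]$. Fubini rewrites (\ref{Ldef}) as $(Lh)(x) = \int_x^{c_1} M_1(p, x) h(p)\,dp$, exhibiting $L$ as a Volterra operator with kernel $M_1(p, x)$. The substitution $w = (x/p)^{2/\al}$ in (\ref{selfdefM}) evaluates $M_1$ in terms of the incomplete Beta function and yields the uniform bound
\[
M_1(p, x) \leq \frac{\al\,\Gamma(\al/2)}{2\,\Gamma(1-\al/2)}\,p \leq A c_1 \quad \text{on } \{0 \leq x \leq p \leq c_1\}.
\]
The classical factorial estimate for Volterra operators then gives $\|L^n G\|_\infty \leq \|G\|_\infty (A c_1^2)^n/n!$, so the series converges uniformly on $[0, c_1]$ to some $h \in C([0, c_1])$ satisfying $h = G + Lh$. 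Defining $F(\mu) := \int_\mu^{c_1} h(y)\,dy$ and $\tilde f(\xi) := F(\xi^{-\al/2})$ and reversing the change of variables shows $\tilde f$ solves (\ref{eqforf})--(\ref{fxidd}) on each $[c_0, R]$, so the uniqueness in Proposition~\ref{istf} forces $\tilde f = f$ and gives (\ref{fwzor}).

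Positivity is then immediate: $G > 0$ on $[0, c_1)$ and the kernel of $L$ is non-negative, so $h \geq G > 0$ on $[0, c_1)$; hence $F > 0$ on $[0, c_1)$ and $f > 0$ on $(c_0, \infty)$. For the regularity of $F$, continuity of $h$ gives $F \in C^1([0, c_1])$ with $F' = -h$; differentiating $h = G + Lh$ (the boundary term at $p = \mu$ vanishes since $M_1(\mu, \mu) = 0$) produces
\[
h'(\mu) = G'(\mu) - \frac{1}{\Gamma(1-\al)}\int_\mu^{c_1}(1-(\mu/p)^{2/\al})^{-\al}h(p)\,dp,
\]
in which the first summand lies in $L^1(0, c_1)$ because of the integrable singularity of $(1-(\mu/c_1)^{2/\al})^{-\al}$ at $c_1$ (here $\al < 1$) and the second is bounded on $[0, c_1]$ by the same estimate as in the previous paragraph; hence $F'' = -h' \in L^1(0, c_1)$.

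The main technical obstacle is the change-of-variable book-keeping that produces the clean equation for $F$: the fractional integral on the left of (\ref{eqforf}) is not visibly related to the convolution-like kernel $(1-(\mu/y)^{2/\al})^{-\al}$, and one must track carefully how the many powers of $\xi$, $\mu$, $p$, $y$ together with the Jacobians conspire to cancel against the $\xi^2 f''$, $\xi f'$ and $(\xi - c_0)^{-\al}$ terms. Once this reduction is carried out, everything subsequent is a standard Volterra/Neumann series argument.
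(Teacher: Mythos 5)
Your proposal is correct and follows essentially the same route as the paper: the same substitution $\mu=\xi^{-\al/2}$, the same reduction of (\ref{eqforf}) to the fixed-point equation $-F'=G+L(-F')$ with $F'(c_1)=0$, the same incomplete-beta bound on the kernel $M_1$ and factorial estimate for $L^n$, and the same positivity and $L^{1}$ arguments for $F''$. The only immaterial difference is that the paper iterates the identity on the known $F'$ and shows $L^{n}F'\to 0$ uniformly on $[x_0,c_1]$ for each $x_0>0$, whereas you build the Neumann series $h=\sum_{n\ge 0}L^{n}G$ first and identify it with $-F'$ by appealing to the uniqueness of Proposition~\ref{istf}.
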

\begin{proof}
In order to prove the positivity of $f$ on $(c_{0}, \infty)$ we have to transform the equation~(\ref{eqforf}). We introduce $\mu : = \xi ^{-\frac{\al}{2}}$ and
\eqq{
F(\mu):=f(\mu^{-\frac{2}{\al}}) = f(\xi).
}{Fxh}
We note that if $\xi \in (c_{0},\infty)$, then $\mu \in (0,c_{1})$ and $f(c_{0}) = f'(c_{0}) = 0$ implies $F(c_{1}) =F'(c_{1})=0$. We will rewrite the identity (\ref{eqforf}) in terms of function $F$. We note that
\eqq{
F'(\mu) = -\frac{2}{\al}\mu^{-\frac{2}{\al}-1}f'(\mu^{-\frac{2}{\al}})
}{ka}
and
\[
F''(\mu) = \frac{2}{\al}(\frac{2}{\al}+1)\mu^{-\frac{2}{\al}-2}f'(\mu^{-\frac{2}{\al}}) + (\frac{2}{\al})^{2}\mu^{-\frac{2}{\al}-1}\mu^{-\frac{2}{\al}-1}f''(\mu^{-\frac{2}{\al}}).
\]
Hence,
\[
\mu^{2}F''(\mu) = \left[\fdak  + \frac{2}{\al}\right]\xi f'(\xi) + \left(\frac{2}{\al}\right)^{2}\xi^{2}f''(\xi).
\]
Furthermore,
\[
\int_{\mu}^{c_{1}} (\mu^{-\frac{2}{\al}}-p^{-\frac{2}{\al}})^{-\al}F'(p)dp =
-\frac{2}{\al}\int_{\mu}^{c_{1}} (\mu^{-\frac{2}{\al}}-p^{-\frac{2}{\al}})^{-\al}p^{-\frac{2}{\al}-1}f'(p^{-\frac{2}{\al}})dp.
\]
Applying the substitution $p^{-\frac{2}{\al}} = w$ we get
\[
\int_{\mu}^{c_{1}} (\mu^{-\frac{2}{\al}}-p^{-\frac{2}{\al}})^{-\al}F'(p)dp =
-\int_{c_{0}}^{\mu^{-\frac{2}{\al}}}(\mu^{-\frac{2}{\al}} - w)^{-\al}f'(w)dw = -\int_{c_{0}}^{\xi}(\xi - w)^{-\al}f'(w)dw.
\]
Using this calculations in (\ref{eqforf}) we get that function $F$ satisfies
\[
F''(\mu) = -\frac{1}{\Gamma(1-\al)} \mu^{-2} \int_{\mu}^{c_{1}}(\mu^{-\frac{2}{\al}}-p^{-\frac{2}{\al}})^{-\al}F'(p)dp + \frac{1}{\Gamma(1-\al)} \mu^{-2} (\mu^{-\frac{2}{\al}}-c_{0})^{-\al},
\]
which is equivalent with
\eqq{
F''(\mu) = -\frac{1}{\Gamma(1-\al)} \int_{\mu}^{c_{1}}(1-p^{-\frac{2}{\al}}\mu^{\frac{2}{\al}})^{-\al}F'(p)dp + \frac{1}{\Gamma(1-\al)}  (1-c_{0}\mu^{\frac{2}{\al}})^{-\al}.
}{eqforF}
Integrating this equality from $x$ to $c_{1}$ and recalling that $F'(c_{1}) = 0$ we get
\eqq{
F'(x) = \frac{1}{\Gamma(1-\al)} \int_{x}^{c_{1}}\int_{\mu}^{c_{1}}(1-p^{-\frac{2}{\al}}
\mu^{\frac{2}{\al}})^{-\al}F'(p)dp d\mu - \frac{1}{\Gamma(1-\al)} \int_{x}^{c_{1}} (1-c_{0}\mu^{\frac{2}{\al}})^{-\al}d\mu.
}{kb}
We are going to obtain an explicit formula for $F$ and we will show that $F$ is positive in $[0,c_{1}]$. Since $f'$ is continuous in $[c_{0},\infty)$ from (\ref{ka}) we may deduce that $F' \in C(0,c_{1}]$.

Then, identity (\ref{kb}) may be written as
\eqq{
F'(x) = (L F')(x) - G(x)
}{Fxa}
where the operator $L$ and function $G$ are defined by (\ref{Ldef}) and (\ref{Gdef}), respectively.
We apply $L$ to both sides of (\ref{Fxa}) and we deduce that
\[
F'(x) = (L^{2} F')(x) - (G(x)+LG(x)).
\]
Iterating this procedure we obtain that for every $n \in \mathbb{N}$ and every $x \in (0,c_{1})$ we have
\eqq{
F'(x) = (L^{n}F')(x) - \sum_{k=0}^{n}(L^{k}G)(x).
}{kf}
Let us show that for every fixed $x_{0} \in (0,c_{1})$
\eqq{
\lim_{n\rightarrow \infty} \max_{x \in [x_{0},c_{1}]}|(L^{n}F')(x)| = 0.
}{kc}
At first we note that for any $x_{0} \in [0,c_{1}]$ and $h \in C([x_{0},c_{1}])$ there holds
\eqq{
\norm{L^{n}h}_{C([x_{0},c_{1}])} \leq \norm{h}_{C([x_{0},c_{1}])} \norm{L^{n}1}_{_{C([x_{0},c_{1}])}}.
}{ke}
Let us focus on the estimate of $L^{n}1$. By the Fubini theorem we have
\[
\frac{1}{\Gamma(1-\al)} \int_{x}^{c_{1}}\int_{\mu}^{c_{1}}(1-p^{-\frac{2}{\al}}
\mu^{\frac{2}{\al}})^{-\al}dp d\mu  = \frac{1}{\Gamma(1-\al)} \int_{x}^{c_{1}}\int_{x}^{p}(1-p^{-\frac{2}{\al}}
\mu^{\frac{2}{\al}})^{-\al} d\mu dp.
\]
We note that
\eqq{\int_{x}^{p}(1-p^{-\frac{2}{\al}}
\mu^{\frac{2}{\al}})^{-\al} d\mu  \leq \fad B(\fad, 1- \al) p, }{Fxc}
where we applied the substitution $w:= p^{-\frac{2}{\al}}
\mu^{\frac{2}{\al}}$. Hence, we obtain
\eqq{0<L1(x)\leq \frac{\Gamma(1+\frac{\al}{2})}{\Gamma(1-\frac{\al}{2})} c_{1} (I_{x} 1)(c_{1})
\hd \m{ for } \hd x\in [0,c_{1}). }{Fxb}
We shall show by induction that
\eqq{0<L^{n}1(x)\leq \left[ \frac{\Gamma(1+\frac{\al}{2})}{\Gamma(1-\frac{\al}{2})} c_{1}\right]^{n} (I^{n}_{x} 1)(c_{1})
\hd \m{ for } \hd x\in [0,c_{1} )}{Fxd}
for each $n\in \mathbb{N}$. Indeed, suppose that (\ref{Fxd}) holds for $n=k-1$ and then we have
\[
L^{k}1(x) = LL^{k-1}1(x)= \frac{1}{\Gamma(1-\al)} \int_{x}^{c_{1}}\int_{x}^{p} (1-p^{-\frac{2}{\al}}
\mu^{\frac{2}{\al}})^{-\al}   d\mu (L^{k-1}1)(p)  dp
\]
\[
\leq  \frac{1}{\Gamma(1-\al)} \left[ \frac{\Gamma(1+\frac{\al}{2})}{\Gamma(1-\frac{\al}{2})} c_{1}\right]^{k-1} \int_{x}^{c_{1}}\int_{x}^{p} (1-p^{-\frac{2}{\al}}
\mu^{\frac{2}{\al}})^{-\al}   d\mu (I^{k-1}_{p}1)(c_{1} )  dp
\]
\[
\leq \frac{1}{\Gamma(1-\al)} \left[ \frac{\Gamma(1+\frac{\al}{2})}{\Gamma(1-\frac{\al}{2})} c_{1}\right]^{k-1} \int_{x}^{c_{1}}\fad B(\fad, 1- \al) p   (I^{k-1}_{p}1)(c_{1} )  dp,
\]
where in the last inequality we used (\ref{Fxc}). Thus, we have
\[
L^{k}1(x) \leq \left[ \frac{\Gamma(1+\frac{\al}{2})}{\Gamma(1-\frac{\al}{2})} c_{1}\right]^{k} \int_{x}^{c_{1}} (I^{k-1}_{p}1)(c_{1})  dp = \left[ \frac{\Gamma(1+\frac{\al}{2})}{\Gamma(1-\frac{\al}{2})} c_{1}\right]^{k} I^{k}_{x}1(c_{1} )
\]
and (\ref{Fxd}) is proven. We note that
\eqq{(I^{n}_{x}1)(c_{1} ) = \frac{1}{\Gamma(n)} \int_{x}^{c_{1} } (c_{1} - \tau )^{n-1}d \tau = \frac{(c_{1}-x)^{n}}{n!} }{Fxe}
hence, by (\ref{Fxd}) we get
\eqq{0<L^{n}1 (x) \leq \left[ \frac{\Gamma(1+\frac{\al}{2})}{\Gamma(1-\frac{\al}{2})} c_{1}^{2}\right]^{n} \frac{1}{n!}. }{kg}
Applying the estimate (\ref{kg}) in (\ref{ke}) we obtain that
\[
\max_{x \in [x_{0},c_{1}]}|(L^{n}F')(x)| \leq \max_{x \in [x_{0},c_{1}]}|F'(x)| \max_{x \in [x_{0},c_{1}]}|L^{n}1(x)|
\]
\[
\leq \max_{x \in [x_{0},c_{1}]}|F'(x)| \left(\frac{\Gamma(1+\frac{\al}{2})}{\Gamma(1-\frac{\al}{2})} c_{1}^{2} \right)^{n} \frac{1}{n!}
\]
and due to the presence of factorial function in the denominator  the convergence (\ref{kc}) holds.
We will show that the series $\sum_{k=0}^{\infty}(L^{k}G)(x)$ is uniformly convergent on  $[0,c_{1}]$. Indeed, applying the substitution $w:=c_{0}\mu^{\frac{2}{\al}} $ in the definition of $G$ we obtain that
\[
G(x) = \frac{1}{\Gamma(1-\al)}\frac{\al}{2}c_{1}\int^{1}_{c_{0}x^{\frac{2}{\al}}} (1-w)^{-\al}w^{\frac{\al}{2}-1}dw.
\]
Thus,
\[
\max_{x \in [0,c_{1}]}\abs{G(x)} \leq \frac{\Gamma(1+\frac{\al}{2})}{\Gamma(1-\frac{\al}{2})}c_{1}.
\]
Applying estimates (\ref{ke}) and (\ref{kg}) for $x_{0}=0$ we arrive at
\[
\max_{x \in [0,c_{1}]}\abs{L^{n}G(x)} \leq \frac{\Gamma(1+\frac{\al}{2})}{\Gamma(1-\frac{\al}{2})}c_{1} \left(\frac{\Gamma(1+\frac{\al}{2})}{\Gamma(1-\frac{\al}{2})}c_{1}^{2}\right)^{n}\frac{1}{n!}=:a_{n}.
\]
We note that
\[
\frac{a_{n+1}}{a_{n}} = \frac{\Gamma(1+\frac{\al}{2})}{\Gamma(1-\frac{\al}{2})}c_{1}^{2}\frac{1}{n+1}
\rightarrow 0 \m{ as } n\rightarrow \infty.
\]
Hence, by comparison criterion and d'Alembert criterion for convergence of the series we obtain that $\sum_{k=0}^{\infty}(L^{k}G)(x)$ is uniformly convergent on $[0,c_{1}]$. Finally, we may pass to the limit in (\ref{kf}) to obtain
\eqq{
F'(x) = -\sum_{n=0}^{\infty}(L^{n}G)(x) \m{ for every } x \in [0,c_{1}],
}{Fxg}
where the right hand side converges uniformly. As a consequence, $F\in C^{1}([0,c_{1}])$ and by (\ref{eqforF}) we get $F''\in L^{1}(0,c_{1})$.

We note that $L^{n}G(x)>0$ for $[0,c_{1} )$ thus,
\eqq{F' <0 \m{ on } [0,c_{1}). }{Fprimneg}
Applying the fundamental theorem of calculus, we may write
\eqq{
F(x) = - \int_{x}^{c_{1}} F'(y)dy = \int_{x}^{c_{1}} \sum_{n=0}^{\infty}(L^{n}G)(y)dy  \hd \m{ for every } x \in [0,c_{1}].
}{Fsum}
Thus, we have obtained that $F$ is positive on $[0,c_{1})$. We recall that the functions $f$ and $F$ are related by the equality (\ref{Fxh}) therefore,  we proved the claim.
\end{proof}

From Corollary~\ref{corodwa} and Proposition~\ref{pos} we arrive at the following conclusion.

\begin{corollary}\label{coroczi}
Let $c_{1}>0$ and $s(t)=c_{1}t^{\fad}$. Let us define
\[
u(x,t) := \int_{xt^{-\frac{\al}{2}}}^{c_{1}} \sum_{n=0}^{\infty}(L^{n}G(y))dy \hd \m{ for } \hd x\in [0, s(t)], \hd t>0,
\]
where $L$ and $G$ are given by (\ref{Ldef}) and (\ref{Gdef}), respectively. Then, the above series converges uniformly and for every $n \in \mathbb{N}$ there holds $L^{n}G(y)>0$ for every $y\in [0,c_{1})$. Moreover, $u(x,t)$ satisfies
\[
\dasx u(x,t) = u_{xx}(x,t) - \jgja (t - \smx )^{-\al} \hd \m{ for  } \hd  0<x<s(t),
\]
\[
u(s(t),t) =0,
\]
\[
\dot{s}(t) = - \jga  \lim_{a \nearrow s(t)} \ddt \left[ \int_{\sm (a)}^{t} (t - \tau)^{\al - 1} u_{x}(a, \tau ) d\tau   \right],
\]
\[
u_{x}(s(t),t)=0,
\]
for $t>0$. Finally, from equality $u(x,t) = F(xt^{-\fad})$ we deduce that for every $t>0$  $u(\cdot , t)\in W^{2,1}(0,s(t))$ and for every $x>0$ there holds $u_{t}(x,\cdot ) \in C([\sm(x), \infty))$.

\end{corollary}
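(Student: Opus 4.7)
The plan is to present this corollary as a clean synthesis of the two immediately preceding results: the explicit series representation for $F$ from Proposition~\ref{pos} gives the formula for $u$, while Corollary~\ref{corodwa} supplies the differential identities and the Stefan and boundary conditions. The only real work is reconciling the two changes of variable ($\xi = tx^{-2/\al}$ and $y = \xi^{-\al/2} = xt^{-\al/2}$) and extending the statement of Corollary~\ref{corodwa}, which is local in $k$, to the whole interval $(0,s(t))$.

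First I would define $f$ to be the solution from Proposition~\ref{istf} corresponding to $c_{0}=c_{1}^{-2/\al}$, and let $F(\mu):=f(\mu^{-2/\al})$ as in Proposition~\ref{pos}. By Corollary~\ref{corodwa} the function $(x,t)\mapsto f(tx^{-2/\al})$ solves the PDE, vanishes on the free boundary, satisfies the Stefan condition (\ref{frees}) together with $u_{x}(s(t),t)=0$, on each stripe $s(t)/k<x<s(t)$ with $k\in(1,\infty)$. Since $\bigcup_{k>1}\bigl(s(t)/k,s(t)\bigr)=\bigl(0,s(t)\bigr)$, all four identities extend to the full interior $0<x<s(t)$. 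Setting $y=xt^{-\al/2}$ gives $f(tx^{-2/\al})=F(y)$, and by formula (\ref{Fsum}) in Proposition~\ref{pos}
\[
u(x,t)=F(xt^{-\al/2})=\int_{xt^{-\al/2}}^{c_{1}}\sum_{n=0}^{\infty}(L^{n}G)(y)\,dy,
\]
which is exactly the asserted representation. The uniform convergence of $\sum L^{n}G$ on $[0,c_{1}]$ and the strict positivity $L^{n}G(y)>0$ on $[0,c_{1})$ were established in Proposition~\ref{pos}, so these parts of the statement are direct citations.

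For the regularity claims I would translate the properties of $F$ back to $u$ via the smooth change of variable $y=xt^{-\al/2}$, which is a $C^{\infty}$--diffeomorphism between $(0,s(t))$ and $(0,c_{1})$ for each fixed $t>0$. Proposition~\ref{pos} provides $F\in C^{1}([0,c_{1}])$ and $F''\in L^{1}(0,c_{1})$; combined with (\ref{fxix}) and (\ref{fxixx}) this yields $u(\cdot,t)\in W^{2,1}(0,s(t))$. For the time regularity I would use (\ref{fxit}): $u_{t}(x,t)=f'(tx^{-2/\al})x^{-2/\al}$, and since $f\in C^{1}([c_{0},\infty))$ (from Proposition~\ref{istf}, applied on every $R$), the map $t\mapsto u_{t}(x,t)$ belongs to $C([s^{-1}(x),\infty))$ for each fixed $x>0$.

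The main (mild) obstacle is purely book-keeping: one must verify that the two substitutions commute correctly so that the series formula for $F$ at $y=xt^{-\al/2}$ really matches $f$ evaluated at $\xi=tx^{-2/\al}$, and one must be pedantic about the direction of the extension from $\bigcup_{k>1}(s(t)/k,s(t))$ to $(0,s(t))$. No new analytic estimate is required; everything else is a direct appeal to Proposition~\ref{pos} and Corollary~\ref{corodwa}.
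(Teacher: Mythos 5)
Your proposal is correct and follows essentially the same route as the paper, which states this corollary as a direct consequence of Corollary~\ref{corodwa} and Proposition~\ref{pos}; your added book-keeping (exhausting $(0,s(t))$ by the stripes $(s(t)/k,s(t))$ and matching the substitutions $\xi=tx^{-2/\al}$, $\mu=\xi^{-\al/2}=xt^{-\al/2}$) is exactly the glue the paper leaves implicit. Nothing is missing.
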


\begin{corollary}\label{coroa}
Functions $u$ and $s$ defined in Corollary \ref{coroczi} satisfy $u_{x}<0$, $u_{t}>0$ in $\{(x,t)\in \rr\times (0,\infty): \hd 0<x<s(t)  \}$,
\eqq{
\forall \hd  x > 0 \hd  u_{x}(x,\cdot) \in L^{\infty}(s^{-1}(x),\infty)\cap AC_{loc}([s^{-1}(x),\infty))
}{doz1}
and
\eqq{
\forall \hd t > 0 \hd  u_{t}(\cdot,t) \in L^{1}(0,s(t)) \hd \m{and} \hd  D^{\al}_{s^{-1}(\cdot)}u(\cdot,t) \in L^{1}(0,s(t)).
}{doz2}
In particular, the pair $(u,s)$ satisfies the assumptions (A1) - (A3).
\end{corollary}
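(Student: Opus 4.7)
The plan is to reduce every assertion to a property of the one-variable profile $F$ established in Proposition~\ref{pos}. Writing $u(x,t)=F(\mu)$ with $\mu=xt^{-\fad}$, the chain rule yields
\[
u_{x}(x,t)=t^{-\fad}F'(\mu),\qquad u_{t}(x,t)=-\fad\,t^{-1}\mu F'(\mu),\qquad u_{xx}(x,t)=t^{-\al}F''(\mu).
\]
Proposition~\ref{pos} supplies $F\in C^{1}([0,c_{1}])$, $F''\in L^{1}(0,c_{1})$, and $F'<0$ on $[0,c_{1})$ via (\ref{Fprimneg}). Since $\mu\in(0,c_{1})$ on $\{(x,t):0<x<s(t)\}$, the signs $u_{x}<0$ and $u_{t}>0$ follow at once.

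For (\ref{doz1}) I would fix $x>0$ and note that $t\geq \sm(x)$ forces $\mu=xt^{-\fad}\in[0,c_{1}]$, hence
\[
\abs{u_{x}(x,t)}=t^{-\fad}\abs{F'(\mu)}\leq (\sm(x))^{-\fad}\norm{F'}_{L^{\infty}(0,c_{1})},
\]
which gives the $L^{\infty}$-bound. For local absolute continuity on any compact $[\sm(x),T]$ I would use that $F''\in L^{1}(0,c_{1})$ forces $F'\in AC([0,c_{1}])$, that the inner change of variable $t\mapsto xt^{-\fad}$ is $C^{1}$ and strictly monotone on $[\sm(x),T]$ with derivative bounded above and away from zero, so that the composition $F'(xt^{-\fad})$ is AC on $[\sm(x),T]$, and finally that multiplication by the $C^{1}$ factor $t^{-\fad}$ preserves absolute continuity.

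For (\ref{doz2}) the self-similar substitution $\mu=xt^{-\fad}$ (so $dx=t^{\fad}d\mu$) converts the cross-sectional integrals to
\[
\int_{0}^{s(t)}\abs{u_{t}(x,t)}dx=\fad\,t^{\fad-1}\int_{0}^{c_{1}}\mu\abs{F'(\mu)}d\mu,\quad \int_{0}^{s(t)}\abs{u_{xx}(x,t)}dx=t^{-\fad}\int_{0}^{c_{1}}\abs{F''(\mu)}d\mu,
\]
both of which are finite by the regularity of $F$. Using the identity $\Dasmx u(x,t)=u_{xx}(x,t)-\jgja(t-\smx)^{-\al}$ from Corollary~\ref{corodwa}, the integrability of $\Dasmx u(\cdot,t)$ then reduces to that of the singular term; the substitution $y=\smx=(x/c_{1})^{\fda}$ recasts $\int_{0}^{s(t)}(t-\smx)^{-\al}dx$ as $\fad c_{1}\int_{0}^{t}(t-y)^{-\al}y^{\fad-1}dy$, a convergent Beta integral. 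Finally, (A1) follows from $\dot s(t)=\fad c_{1}t^{\fad-1}>0$; (A2) combines the time-AC of $u_{x}(x,\cdot)$ just established, the spatial AC of $u_{x}(\cdot,t)$ inherited from $u(\cdot,t)\in W^{2,1}(0,s(t))$ of Corollary~\ref{coroczi}, and the $L^{1}$-bound on $u_{t}(\cdot,t)$; (A3) combines $\dot s\in C((0,\infty))\subset L^{\infty}_{loc}$ with the $L^{1}$-bound on $\Dasmx u(\cdot,t)$. The most delicate point is the composition argument for $AC_{loc}$ in (\ref{doz1}); everything else is a direct computation resting on the regularity of the profile $F$.
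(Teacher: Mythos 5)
Your proposal is correct and follows the same overall strategy as the paper: every claim is reduced to the regularity and sign of the profile $F$ ($F'\in C([0,c_1])$, $F''\in L^{1}(0,c_1)$, $F'<0$ on $[0,c_1)$) via the chain rule. Two sub-steps are handled by genuinely different arguments, both of which work. For the $AC_{loc}$ claim in (\ref{doz1}) the paper computes $u_{x,t}$ explicitly and checks $\int_{s^{-1}(x)}^{t^{*}}\abs{u_{x,t}(x,t)}\,dt<\infty$ by the substitution $\mu=xt^{-\al/2}$, whereas you invoke a composition lemma (an $AC$ function composed with a monotone $C^{1}$ change of variable with bounded derivative is $AC$, and multiplication by a Lipschitz factor preserves this); your route avoids the explicit integral estimate but tacitly uses the monotonicity of $t\mapsto xt^{-\al/2}$ to keep the images of disjoint intervals disjoint, which is exactly why the composition lemma applies here. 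For the $L^{1}$ bound on $D^{\al}_{s^{-1}(\cdot)}u(\cdot,t)$ the paper works directly from the definition of the Caputo derivative, applies Fubini, and evaluates the resulting Beta-type integral to get the explicit value $-\frac{\Gamma(1+\al/2)}{\Gamma(1-\al/2)}\int_{0}^{c_1}pF'(p)\,dp$; you instead read the bound off the equation $D^{\al}_{s^{-1}(x)}u=u_{xx}-\frac{1}{\Gamma(1-\al)}(t-s^{-1}(x))^{-\al}$, reducing it to $u_{xx}(\cdot,t)\in L^{1}$ (from $F''\in L^{1}$) plus the convergence of $\int_{0}^{s(t)}(t-s^{-1}(x))^{-\al}dx$ as a Beta integral. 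Your version is shorter and exploits the already-established PDE; the paper's version is self-contained and yields an exact constant, which is occasionally useful elsewhere. No gaps.
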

\begin{proof}
At first, we recall that
\[
u_{x}(x,t) = t^{-\frac{\al}{2}}F'(\mu), \hd  \hd u_{t}(x,t) = -\fad x t^{-\fad -1} F'(\mu),
\]
where $\mu = xt^{-\frac{\al}{2}}$.
Hence, by (\ref{Fprimneg}) we infer $u_{x}<0$, $u_{t}>0$.
Since, $F' \in C([0,c_{1}])$  and for fixed $x > 0$ function $\mu=\mu(x,t)$ is continuous and bounded with respect to $t$ belonging to $ [s^{-1}(x),\infty)$, we obtain that $u_{x}(x,\cdot) \in L^{\infty}(s^{-1}(x),\infty)\cap C([s^{-1}(x),\infty))$.

Let us show that $u_{x}(x,\cdot)$ is absolutely continuous. We may calculate
\[
u_{x,t}(x,t) = -\frac{\al}{2}t^{-\frac{\al}{2}-1}F'(xt^{-\frac{\al}{2}}) - \frac{\al}{2}t^{-\al}xF''(xt^{-\frac{\al}{2}}).
\]
Hence, for every $t^{*} > 0$
\[
\int_{s^{-1}(x)}^{t^{*}}\abs{u_{x,t}(x,t)}dt = \int_{c_{0}x^{\frac{2}{\al}}}^{t^{*}}\abs{-\frac{\al}{2}t^{-\frac{\al}{2}-1}F'(xt^{-\frac{\al}{2}}) - \frac{\al}{2}t^{-\al}xF''(xt^{-\frac{\al}{2}})}dt.
\]
Applying the substitution $\mu = xt^{-\frac{\al}{2}}$ we get
\[
\int_{s^{-1}(x)}^{t^{*}}\abs{u_{x,t}(x,t)}dt \leq \int_{xt^{*-\frac{\al}{2}}}^{c_{1}}x^{-1}\abs{F'(\mu)}d\mu + \int_{xt^{*-\frac{\al}{2}}}^{c_{1}}x^{\frac{2}{\al}-1}\mu^{1-\frac{2}{\al}}\abs{F''(\mu)}d\mu < \infty,
\]
because from Proposition \ref{pos} we have $F' \in C([0,c_{1}])$, $F'' \in L^{1}(0,c_{1})$.
To prove (\ref{doz2}), we note that for every $t > 0$
\[
\| u(\cdot , t)\|_{L^{1}(0, s(t))}= \int_{0}^{s(t)} u_{t}(x,t)dx = -\frac{\al}{2}\int_{0}^{c_{1}t^{\frac{\al}{2}}} xt^{-\frac{\al}{2}-1}F'(xt^{-\frac{\al}{2}})dx = -\frac{\al}{2} t^{\frac{\al}{2}-1}\int_{0}^{c_{1}}p F'(p)dp < \infty,
\]
because $F' \in C([0,c_{1}])$.
Using this results we obtain further,
\[
\int_{0}^{s(t)}\abs{D^{\al}_{s^{-1}(x)}u(x,t)}dx =\frac{1}{\Gamma(1-\al)}\int_{0}^{s(t)}\int_{s^{-1}(x)}^{t}(t-\tau)^{-\al}u_{t}(x,\tau)d\tau dx
\]
\[
=\frac{1}{\Gamma(1-\al)} \int_{0}^{t} (t-\tau)^{-\al} \int_{0}^{s(\tau)} u_{t}(x,\tau)dx d\tau =
-\frac{1}{\Gamma(1-\al)} \frac{\al}{2}\int_{0}^{c_{1}}p F'(p)dp\int_{0}^{t} (t-\tau)^{-\al}\tau^{\frac{\al}{2}-1}d\tau
\]
\[
=-\frac{\Gamma(1+\frac{\al}{2})}{\Gamma(1-\frac{\al}{2})}\int_{0}^{c_{1}}pF'(p)dp < \infty.
\]
Corollary \ref{coroczi} together with (\ref{doz1}) and (\ref{doz2}) implies that the pair $(u,s)$ satisfies the assumptions (A1) - (A3).
\end{proof}

\subsection{Boundary condition}

By Corollary~\ref{coroczi}, for each $c_{1} > 0$ we have obtained self-similar solution of time-fractional Stefan problem $(u,s)_{c_{1}}$ such that
\eqq{
u(0,t) = \int_{0}^{c_{1}} \sum_{n=0}^{\infty}(L^{n}G(y))dy.
}{kdir}
Now, we address the Dirichlet boundary condition (\ref{diri}). We investigate whether for given $\gamma>0$ it is possible to find $c_{1}>0$ such that  $(u,s)_{c_{1}}$ satisfy (\ref{lab})-(\ref{frees}).

For this purpose  we write explicitly the dependence of solution on $c_{1}$. Recall, that from (\ref{Ldef}), (\ref{Gdef}) and the Fubini theorem  we have
\eqq{(\Lcj h)(y) = \jgja  \inycj \inyp (1- p^{-\fda } \mu^{\fda})^{-\al} d\mu h(p)dp,   }{Lcj}
\eqq{\Gcj (y) = \jgja \inycj (1- c_{1}^{-\fda} \mu^{\fda})^{-\al} d\mu.  }{Gcj}

The next proposition provides the representation (\ref{selfuxt}) of the self-similar solution.

\begin{prop}\label{propfajf}
If $c_{1}$ is positive and $s(t)=c_{1}t^{\fad}$, then for  $t>0$ and $x\in [0, s(t)]$ we have
\eqq{
\int_{xt^{-\frac{\al}{2}}}^{c_{1}} \sum_{n=0}^{\infty}(L^{n}_{c_{1}}G_{c_{1}}(y))dy = \int_{xt^{-\fad}}^{c_{1}} H(p, xt^{-\fad})G_{c_{1}}(p)dp,
}{lnnaha}
where the function $H$ is defined by (\ref{selfdefH})-(\ref{selfdefMn}). Furthermore, $H-1$ is positive on the set $W:=\{(p,x): \hd 0\leq x <p  \}$ and $H$ is continuous on $\overline{W}$.

\end{prop}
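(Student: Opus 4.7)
The plan is to recognize $L_{c_1}$ as an integral operator with kernel $M_1$, identify $M_n$ with the kernel of $L_{c_1}^n$, and then interchange summation with integration so that $\sum_n L_{c_1}^n G_{c_1}$ collapses, after one outer integration, to the claimed expression involving $H$.

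First, by the Fubini theorem applied inside formula (\ref{Lcj}) (the region $\{(\mu,p):\ y\le\mu\le p\le c_{1}\}$), for any continuous $h$ on $[0,c_{1}]$
\begin{equation*}
(\Lcj h)(y)=\int_{y}^{c_{1}}M_{1}(p,y)\,h(p)\,dp,
\end{equation*}
where $M_{1}$ is defined in (\ref{selfdefM}). I would then prove by induction that for every $n\ge 1$
\begin{equation*}
(\Lcj^{n}h)(y)=\int_{y}^{c_{1}}M_{n}(p,y)\,h(p)\,dp,
\end{equation*}
the inductive step being a Fubini interchange over $\{(a,p):\ y\le a\le p\le c_{1}\}$ which yields exactly the recursion (\ref{selfdefMn}).

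Next I would justify convergence of the series. Using the substitution $w=p^{-\fda}\mu^{\fda}$ exactly as in the proof of Proposition~\ref{pos} (see (\ref{Fxc})), one gets the pointwise estimate $M_{1}(p,y)\le \fGG\,p$. Iterating the recursion (\ref{selfdefMn}) then gives, by induction, $M_{n}(p,y)\le (\fGG R)^{n}(p-y)^{n-1}/(n-1)!$ on $W_{R}$, which secures uniform convergence of $N=\sum_{n\ge1}M_{n}$ and positivity of $N$ on $\{0\le y<p\}$; continuity of $N$ on $W_{R}$ (and hence of $H$ on $\overline{W}$) follows because each $M_{n}$ is continuous and the series converges uniformly.

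With these tools the identity (\ref{lnnaha}) is a double Fubini computation. Set $z=xt^{-\fad}$. Uniform convergence lets me write
\begin{equation*}
\sum_{n=0}^{\infty}(\Lcj^{n}\Gcj)(y)=\Gcj(y)+\int_{y}^{c_{1}}N(p,y)\Gcj(p)\,dp.
\end{equation*}
Integrating in $y$ from $z$ to $c_{1}$ and applying Fubini on the triangle $\{z\le y\le p\le c_{1}\}$ converts the right-hand side into
\begin{equation*}
\int_{z}^{c_{1}}\Gcj(p)\,dp+\int_{z}^{c_{1}}\Gcj(p)\!\int_{z}^{p}N(p,y)\,dy\,dp=\int_{z}^{c_{1}}H(p,z)\Gcj(p)\,dp,
\end{equation*}
where the last equality is just the definition (\ref{selfdefH}) of $H$. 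Since each $M_{n}$ is strictly positive on $\{y<p\}$, so is $N$, hence $H(p,x)-1=\int_{x}^{p}N(p,y)\,dy>0$ on $W$, and continuity of $H$ on $\overline{W}$ has already been noted. The main technical obstacle is the uniform bound on $M_{n}$ on $W_{R}$ — once the factorial decay is established, both the Fubini interchanges and all the limits are routine.
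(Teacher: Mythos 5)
Your proposal is correct and follows essentially the same route as the paper: identify $M_{n}$ as the iterated kernel of $L_{c_{1}}^{n}$ via Fubini and induction, establish the factorial bound $M_{n}(p,y)\le\left[\fGG p\right]^{n}(p-y)^{n-1}/(n-1)!$ to get uniform convergence of $N=\sum_{n\ge 1}M_{n}$ on $W_{R}$, and then perform one more Fubini interchange on the triangle $\{z\le y\le p\le c_{1}\}$ to produce $H$. The only cosmetic difference is that you state the kernel bound with $R$ in place of $p$, which changes nothing on $W_{R}$.
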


\begin{proof}
We will find another recursive formula for $L^{n}_{c_{1}}G_{c_{1}}$.
For $0\leq y \leq p < \infty$ we denote
\eqq{M_{1}(p,y) := \jgja \inyp (1- p^{-\fda } \mu^{\fda})^{-\al} d\mu .  }{defM}
Then, we may write
\[
(\Lcj h)(y) =  \inycj M_{1}(p,y) h (p)dp.
\]
Further, we obtain
\[
(\Lcj^{2} \Gcj) (y) = \inycj M_{1}(p,y)(\Lcj \Gcj) (p) dp = \inycj \int_{y}^{r} M_{1}(p,y)M_{1}(r,p) dp \Gcj (r) dr.
\]
Thus, if we denote
\eqq{M_{2}(r,y):= \int_{y}^{r} M_{1}(p,y)M_{1}(r,p) dp}{defMd}
then,
\[
(\Lcj^{2} \Gcj )(y) =  \inycj M_{2}(p,y) \Gcj  (p)dp.
\]
By induction we obtain
\eqq{ (\Lcj^{n} \Gcj )(y) =  \inycj M_{n}(p,y) \Gcj  (p)dp \hd \hd \m{ for } \hd n\geq 1
 }{defLn}
where we set
\eqq{ M_{n}(p,y):= \int_{y}^{p} M_{1}(a,y)M_{n-1}(p, a) da \hd \hd \m{ for } \hd n\geq 2. }{defMn}
Now, we shall obtain the estimate for $M_{n}$. By (\ref{Fxc}) we get
\eqq{M_{1}(p,y)\leq \fGG p.}{estiM}
Then,
\[
M_{2}(p,y) \leq \left[ \fGG \right]^{2}p\inyp a da \leq \left[ \fGG p \right]^{2} (I_{y}1)(p).
\]
We prove by induction that
\eqq{M_{n}(p,y) \leq \left[ \fGG p \right]^{n}(I^{n-1}_{y}1)(p), \hd \hd n\geq 2.
}{estiMin}
Indeed, if
\[
M_{k}(p,y) \leq \left[ \fGG p \right]^{k}(I^{k-1}_{y}1)(p),
\]
then by (\ref{estiM}) we obtain
\[
M_{k+1}(p,y) \leq \left[ \fGG p \right]^{k} \inyp \fGG a  (I^{k-1}_{a}1)(p) da
\]
\[
\leq \left[ \fGG p \right]^{k+1} \inyp   (I^{k-1}_{a}1)(p) da = \left[ \fGG p \right]^{k+1}  (I^{k}_{y}1)(p)
\]
hence, we arrive at (\ref{estiMin}). Applying (\ref{Fxe}) in (\ref{estiMin}) we get the following estimate
\eqq{M_{n}(p,y) \leq \left[ \fGG p \right]^{n}\frac{p^{n-1}}{(n-1)!} \hd \m{ for } \hd n\geq 2. }{estisumM}
Let us define
\eqq{N(p,y) := \sum_{n=1}^{\infty} M_{n}(p,y), \hd 0\leq y \leq p <\infty. }{defsumM}
If $R>0$, then by (\ref{estisumM}) the series converges uniformly on the set
\eqq{
W_{R} = \{(p,y): \hd 0\leq y \leq p \leq R \}.
}{defWR}
In particular, $N$ is continuous, non-negative and bounded on $W_{R}$ for each $R$ positive.

If we sum over $n$ both sides of (\ref{defLn}), then we get
\eqq{\sum_{n=1}^{\infty} \Lcj^{n} \Gcj (y) = \inycj N(p,y) \Gcj (p) dp. }{LnM}
Therefore, we have
\[
\int_{xt^{-\frac{\al}{2}}}^{c_{1}} \sum_{n=0}^{\infty}(L^{n}_{c_{1}}G_{c_{1}}(y))dy =
\int_{xt^{-\frac{\al}{2}} }^{c_{1}} \Gcj (y) dy + \int_{xt^{-\frac{\al}{2}} }^{c_{1}} \inycj N(p,y) \Gcj (p) dp dy .
\]
If we denote
\eqq{H(p,x) := 1+ \int_{x}^{p} N(p,y)dy \hd \m{ for } \hd 0\leq x \leq p  ,  }{defH}
then after applying Fubini theorem we obtain (\ref{lnnaha}).
\end{proof}

Now, we shall investigate the dependence of the self-similar solution obtained in Corollary~\ref{coroczi} from the parameter $c_{1}$. For this purpose we apply the representation given by Proposition~\ref{propfajf} and we denote
\eqq{F_{c_{1}}(x) = \int_{x}^{c_{1}} H(p,x) \Gcj (p) dp. }{FGreen}
Recall, that the function $H$ is continuous and bounded. We examine the continuity of  the mapping
\eqq{c_{1} \mapsto F_{c_{1}}(x) = \int_{x}^{c_{1}} H(p,x) \Gcj (p) dp .}{conticj}
The precise formulation is stated below.

\begin{prop}\label{gammcj}
Assume that $c_{1}$ is positive. Then for every  $x\in [0, c_{1})$
\eqq{\lim_{\cjk\rightarrow c_{1}} F_{\cjk}(x) = F_{c_{1}}(x).  }{limcj}
Moreover, we have
\eqq{\lim_{c_{1}\searrow 0} F_{c_{1}}(0)=0}{limcjz}
and
\eqq{\lim_{c_{1}\nearrow \infty} F_{c_{1}}(0)=\infty.}{limcjin}
Furthermore, if $\gamma > 0$, then there exists positive $c_{1}$ such that
\eqq{F_{c_{1}}(0)= \int_{0}^{c_{1} } H(p, 0 ) G_{c_{1}}(p)dp = \gamma.  }{selfdiricj}

\end{prop}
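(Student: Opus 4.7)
The plan is to rescale the variables in $F_{c_{1}}(x)$ so that the singular factor $(1-c_{1}^{-\fda}\mu^{\fda})^{-\al}$, whose blow-up point $\mu=c_{1}$ moves with $c_{1}$, is pulled to a fixed location. Substituting $\mu=c_{1}\sigma$ in (\ref{Gcj}) and then $p=c_{1}q$ in (\ref{FGreen}) yields
\[
F_{c_{1}}(x)=\frac{c_{1}^{2}}{\gja}\int_{x/c_{1}}^{1}H(c_{1}q,x)\,\Psi(q)\,dq,\qquad \Psi(q):=\int_{q}^{1}(1-\sigma^{\fda})^{-\al}\,d\sigma.
\]
The function $\Psi$ is now independent of $c_{1}$, and the substitution $w=\sigma^{\fda}$ gives $\Psi(0)=\fad B(\fad,1-\al)<\infty$, so $\Psi\in L^{\infty}(0,1)$. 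From Proposition~\ref{propfajf}, $H$ is continuous on $\overline{W}$ and therefore bounded on each $W_{R}$.

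With this representation, (\ref{limcj}) is an immediate dominated convergence argument: for $\cjk$ in a neighborhood of $c_{1}$, the integrand $\mathbf{1}_{[x/\cjk,1]}(q)\,H(\cjk q,x)\,\Psi(q)$ is dominated by $(\sup_{W_{2c_{1}}}H)\,\Psi(q)\in L^{1}(0,1)$ and converges pointwise to its $c_{1}$-analogue using continuity of $H$ and $x/\cjk\to x/c_{1}$. For (\ref{limcjz}), continuity of $H$ at $(0,0)$ gives $H(c_{1}q,0)\to H(0,0)=1$ uniformly on $q\in[0,1]$ as $c_{1}\to 0$, so the integral stays bounded while the $c_{1}^{2}$ prefactor drives $F_{c_{1}}(0)$ to zero.

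For (\ref{limcjin}) use $H\geq 1$ together with Fubini:
\[
\int_{0}^{1}\Psi(q)\,dq=\int_{0}^{1}\sigma(1-\sigma^{\fda})^{-\al}\,d\sigma=\fad B(\al,1-\al)=\fad\,\Gamma(\al)\gja,
\]
yielding $F_{c_{1}}(0)\geq \Gamma(1+\al)c_{1}^{2}/2\to\infty$. Combining this with the case $x=0$ of (\ref{limcj}), which makes $c_{1}\mapsto F_{c_{1}}(0)$ continuous on $(0,\infty)$, and with (\ref{limcjz}), the intermediate value theorem yields $c_{1}>0$ satisfying $F_{c_{1}}(0)=\gamma$ for any prescribed $\gamma>0$.

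The main obstacle is the moving singularity in $G_{c_{1}}$; once the rescaling pins it to $\sigma=1$, each remaining step reduces to a routine dominated convergence or Fubini argument.
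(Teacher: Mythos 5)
Your proof is correct, and the substitution $\mu=c_{1}\sigma$, $p=c_{1}q$ is a genuinely different --- and cleaner --- route to the continuity statement (\ref{limcj}) than the one in the paper. The paper works directly with the difference $F_{\cjk}(x)-F_{c_{1}}(x)$, splitting it as $\int_{c_{1}}^{\cjk}H(p,x)G_{\cjk}(p)\,dp+\int_{x}^{c_{1}}H(p,x)\left[G_{\cjk}(p)-G_{c_{1}}(p)\right]dp$ and then estimating $G_{\cjk}-G_{c_{1}}$ by hand; the second piece needs an absolute-continuity-of-the-integral argument, bounding $\int_{\cjk^{-\fda}p^{\fda}}^{c_{1}^{-\fda}p^{\fda}}(1-w)^{-\al}w^{\fad-1}dw$ by a supremum over small subsets of $[0,1]$. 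Your rescaling removes the moving singularity once and for all, writes $F_{c_{1}}(x)$ as an integral over the fixed interval against the $c_{1}$-independent kernel $\Psi$, and reduces (\ref{limcj}) to a one-line dominated convergence argument; the only inputs are $\Psi\in L^{1}(0,1)$ and the continuity and local boundedness of $H$ from Proposition~\ref{propfajf}, which the paper relies on as well. For (\ref{limcjin}) both arguments use $H\ge 1$ and reduce $\int_{0}^{c_{1}}G_{c_{1}}(p)\,dp$ to a Beta integral growing like $c_{1}^{2}$; your Fubini computation gives the exact value $\frac{\Gamma(1+\al)}{2}c_{1}^{2}$ of that lower bound, where the paper settles for the cruder estimate obtained from $w^{\fad-1}\ge 1$, but this makes no difference to the conclusion. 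The remaining steps ((\ref{limcjz}) and the intermediate value argument for (\ref{selfdiricj})) coincide with the paper's.
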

\begin{proof}
Let us fix $x\in [0, c_{1})$ and assume that $\cjk>c_{1}$. Then by formula (\ref{FGreen}) we get
\[
F_{\cjk}(x) - F_{c_{1}}(x) = \int_{c_{1}}^{\cjk} H(p,x) G_{\cjk} (p)dp + \int_{x}^{c_{1}}H(p,x) [G_{\cjk}(p) - G_{c_{1}}(p)]dp .
\]
We note that $H$ is bounded on $\{(p,x): \hd 0\leq x \leq p \leq \cjk \}$ and
\[
|G_{\cjk}(p)| \leq \frac{\Gamma(1+\fad)}{\Gamma(1-\fad)} \cjk
\]
hence, the first integral converges to zero, if $\cjk \searrow c_{1}$. Next, we write
\[
G_{\cjk}(p) - G_{c_{1}}(p) = \frac{\al}{2\Gamma(1-\al)} \left[ (\cjk - c_{1})\int_{\cjk^{-\fda} p^{\fda}}^{1} (1-w)^{-\al} w^{\fad - 1} dw + c_{1}\int_{\cjk^{-\fda} p^{\fda}}^{c_{1}^{-\fda} p^{\fda}} (1-w)^{-\al} w^{\fad - 1} dw \right].
\]
The first integral is uniformly bounded by $B(1-\al, \fad)$ hence, the first term converges to zero, if $\cjk \searrow c_{1}$. The second integral also  converges to zero because
\[
\int_{\cjk^{-\fda} p^{\fda}}^{c_{1}^{-\fda} p^{\fda}} (1-w)^{-\al} w^{\fad - 1} dw \leq \sup_{W\subset [0,1], |W|\leq (\frac{\cjk}{c_{1}})^{\fda}-1 } \int_{W}(1-w)^{-\al} w^{\fad - 1} dw \rightarrow 0,
\]
if  $\cjk \searrow c_{1}$. The case $\cjk < c_{1}$ may be shown similarly. Therefore, we obtained  (\ref{limcj}).

To get (\ref{limcjz}) we note that
\[
F_{c_{1}}(0) = \int_{0}^{c_{1}} H(p,0)G_{c_{1}}(p)dp \leq  \| H \|_{L^{\infty}(W_{c_{1}})} \frac{\Gamma(1+\fad)}{\Gamma(1-\fad)}c_{1} \rightarrow 0,
\]
if $c_{1}\searrow 0 $.

Recall that $N$ is non-negative, thus we have
\[
F_{c_{1}}(0) \geq \int_{0}^{c_{1}} G_{c_{1}} (p)dp
 = \frac{\al}{2\Gamma(1-\al)}c_{1}\int_{0}^{c_{1}}\int_{c_{1}^{-\frac{2}{\al}}p^{\frac{2}{\al}}}^{1} (1-w)^{-\al}w^{\frac{\al}{2}-1}dwdp
\]
\[
\geq \frac{\al}{2\Gamma(1-\al)}c_{1}\int_{0}^{c_{1}}\int_{c_{1}^{-\frac{2}{\al}}p^{\frac{2}{\al}}}^{1} (1-w)^{-\al}dwdp
=\frac{\al}{2\Gamma(2-\al)}c_{1}\int_{0}^{c_{1}}(1-c_{1}^{-\frac{2}{\al}}p^{\frac{2}{\al}})^{1-\al}dp
\]
\[
= \frac{\fadk c_{1}^{2}}{\Gamma(2-\al)} B(2-\al, \fad) = \frac{\al \Gamma(1+\fad)}{2\Gamma(2-\fad)}c_{1}^{2}\rightarrow \infty \m{ as } c_{1}\rightarrow \infty
\]
and we proved (\ref{limcjin}).

Finally, it remains to prove that  for each  $\gamma \in (0,\infty)$ there exists $c_{1} \in (0,\infty)$ such that
\[
F_{c_{1}}(0) = \gamma.
\]
From (\ref{limcj}) we deduce the continuity of $(0,\infty)\ni c_{1} \mapsto F_{c_{1}}(0)$. Applying the Darboux property together with (\ref{limcjz}), (\ref{limcjin}) we deduce that this map is onto $(0, \infty)$.
\end{proof}
To prove Theorem \ref{samo}, it remains to collect the obtained results.
\begin{proof}[Proof of Theorem~\ref{samo}]
The result is a direct consequence of Corollary~\ref{coroczi}, Corollary~\ref{coroa}, Proposition~\ref{propfajf} and Proposition~\ref{gammcj}.
\end{proof}

\begin{proof}[Proof of Corollary~\ref{neum}]
We note that Corollary~\ref{neum} is a simple consequence of  Theorem~\ref{samo}. Indeed, from the formula (\ref{selfuxt}) we obtain that
\[
u_{x}(0,t) = -t^{-\frac{\al}{2}}\left[c_{1}\frac{\Gamma(1+\frac{\al}{2})}{\Gamma(1-\frac{\al}{2})} + \int_{0}^{c_{1}}N(p,0)G_{c_{1}}(p)dp\right]=:-t^{-\frac{\al}{2}}g(c_{1}).
\]
Since $N$ is continuous and bounded on $W_{R}$ for every $R>0$ and $G_{c_{1}}$ is continuous with respect to $c_{1}$, we obtain that $g$ is continuous as well.
Furthermore, $g(0) = 0$ and $\lim_{c_{1}\rightarrow \infty}g(c_{1}) = \infty$. Thus, Corollary~\ref{neum} follows from the Darboux property.
\end{proof}

\section{Convergence to classical solution }\label{classical}

This section is devoted to the proof of Theorem~\ref{uniformly}. Let us  fix $c_{1}>0$.
We recall the representation of solution to the system (\ref{lab}) - (\ref{frees}) given in Corollary~\ref{coroczi}:
\[
\sal(t) = c_{1}\tad , \hd
\]
\[
\ual(x,t) = \int_{xt^{-\frac{\al}{2}}}^{c_{1}} \sum_{n=0}^{\infty}(L^{n}_{c_{1},\al}G_{c_{1},\al}(y))dy \hd \m{ for } \hd x\in [0, \sal(t)], \hd t>0,
\]
where we added a subscript $\al$ to emphasize that the solution depends on $\al$.
We rewrite also the formulas (\ref{Gcj})-(\ref{defMn}) with a new subscript $\al$. Then we have
\[
M_{1,\al}(p,y) := \jgja \inyp (1- p^{-\fda } \mu^{\fda})^{-\al} d\mu,
\]
\[
M_{n, \al}(p,y):= \int_{y}^{p} M_{1, \al}(a,y)M_{n-1, \al}(p, a) da \hd \hd \m{ for } \hd n\geq 2
\]
for $0\leq y \leq p $ and
\[
\Gcja (y) = \jgja \inycj (1- c_{1}^{-\fda} \mu^{\fda})^{-\al} d\mu,
\]
\[
(\Lcja^{n} \Gcja )(y) =  \inycj M_{n, \al}(p,y) \Gcja  (p)dp \hd \hd \m{ for } \hd n\geq 1 \hd \m{and } \hd 0\leq y \leq c_{1}.
\]
Now we may pass to the proof of Theorem~\ref{uniformly}.
\begin{proof}[Proof of Theorem~\ref{uniformly}]
We would like to pass to the limit with $\al$ in the formula for $u_{\al}$. Hence, at first
we shall calculate the limit as $\al \nearrow 1$ in the formulas for $M_{n, \al}$, $\Gcja$ and $\Lcja^{n} \Gcja$. After a substitution $q:=p^{-\fda } \mu^{\fda}$ we get
\[
\mja (p, y )= p \fGG - \frac{\ald p}{\Gja } \int_{0}^{p^{-\fda } y^{\fda}} (1- q )^{-\al} q^{\fad-1}dq.
\]
We note that
\[
\lima \frac{\ald p}{\Gja } \int_{0}^{p^{-\fda } y^{\fda}} (1- q )^{-\al} q^{\fad-1}dq =0 \m{ \hd for \hd } 0\leq y <p,
\]
because for $0\leq y<p$ we have
\[
\frac{\ald p}{\Gja } \int_{0}^{p^{-\fda } y^{\fda}} (1- q )^{-\al} q^{\fad-1}dq \leq \frac{\ald p}{\Gja } (1- p^{-\fda } y^{\fda} )^{-\al} \int_{0}^{p^{-\fda } y^{\fda}}  q^{\fad-1}dq
\]
\[
=\frac{y}{\Gja} (1- p^{-\fda } y^{\fda} )^{-\al}  \rightarrow 0 \hd \m{ as } \hd \al \rightarrow 1.
\]
Then we denote
\eqq{\mjj(p,y):=\lima \mja(p,y)=\left\{ \begin{array}{lll} \jd p  & \m{ \hd for \hd } & 0\leq y <p, \\ 0 & \m{ \hd  for \hd }  & 0 <y=p. \end{array}\right.}{classf}
From the definition of $G_{c_{1},\al}$ we infer that the same calculations as for $M_{1,\al}$ lead to
\eqq{\Gcjj(p,y):=\lima \Gcja(p,y)=\left\{ \begin{array}{lll} \jd c_{1}  & \m{ \hd for \hd } & 0\leq y <c_{1}, \\ 0 & \m{ \hd  for \hd }  & y=c_{1}
\end{array}\right.}{classg}
and
\eqq{0\leq \Gcja (y) \leq c_{1}, \hd \m{ for \hd  } y\in [0,c_{1}], \hd \al \in (0,1].}{classk}
From (\ref{estiM}) and (\ref{estisumM}) we deduce that
\eqq{0\leq \mna (p,y) \leq  \frac{p^{2n-1}}{(n-1)!} \hd \m{ for \hd } \al\in (0,1), \hd 0\leq y \leq p, \hd n\geq 1  .  }{classh}
The above estimates allow us to apply the Lebesgue's Dominated Convergence Theorem (LDCT) and we get
\eqq{\mnj(p,y):=\lima \mna(p,y)=\int_{y}^{p} M_{1, 1}(a,y)M_{n-1, 1}(p, a) da \hd \hd \m{ for } \hd  0\leq y \leq p, \hd n\geq 2.   }{classi}
Furthermore, the estimate (\ref{classh}) gives
\eqq{0\leq \mnj (p,y) \leq  \frac{p^{2n-1}}{(n-1)!} \hd \m{ for \hd }    0\leq y \leq p, \hd n\geq 1 .  }{classj}
Applying again  (\ref{classk}), (\ref{classh}) together with LDCT we obtain
\eqq{(\Lcjj^{n} \Gcjj)(y) := \lima  (\Lcja^{n} \Gcja)(y)= \int_{y}^{c_{1}} M_{n, 1}(p, y)\Gcjj(p) dp \hd \hd \m{ for } \hd 0\leq y \leq c_{1}, \hd n\geq 1.  }{clsaal}
Moreover, making use of (\ref{classk}), (\ref{classh}) and (\ref{classj}) we arrive at the following estimate
\eqq{0\leq (\Lcja^{n} \Gcja)(y) \leq \frac{c_{1}^{2n+1}}{n!} \hd \m{ for \hd } 0\leq y\leq c_{1}, \hd  n\geq 0, \hd \al\in (0,1].}{classm}
Taking advantage of (\ref{clsaal}) and (\ref{classm}) we get that
\eqq{\lima \sumn (\Lcja^{n}\Gcja)(y)=\sumn (\Lcjj^{n}\Gcjj)(y) \hd \m{ for \hd } y\in [0, c_{1}].  }{classn}
Let us denote by $\ualf$ the extension by zero of $u_{\al}$ for $x$ belonging to $[c_{1}\tad, c_{1}\tjd]$. Namely,
\eqq{\ualf(x,t)= \left\{ \begin{array}{cll} \ual (x,t) & \m{ for } & \hd t>0, \hd x\in [0,c_{1}\tad ] \\ 0 & \m{  for }  & \hd  t>1, \hd  x\in [c_{1}\tad, c_{1}\tjd] .  \\ \end{array}    \right.}{defualfc}
We introduce the following definition
\eqq{\uj(x,t):= \lima \ualf (x,t), \hd t>0, \hd x\in [0,c_{1}\tjd].  }{claa}
We shall characterize the above limit. If $x\in [0,c_{1}\tjd)$, then we note that
\eqq{\uj(x,t):= \lima \int_{x\tmad }^{c_{1}} \sumn (\Lcja^{n}\Gcja)(y) dy =\int_{x\tmjd }^{c_{1}} \sumn (\Lcjj^{n}\Gcjj)(y) dy,  }{clab}
where we applied (\ref{classm}), (\ref{classn}) together with LDCT. If $x=c_{1}\tjd$, then for $t\geq 1 $ we have $\ualf(c_{1}\tjd, t)=0$ so, $\uj(c_{1}\tjd,t)=0$. Finally, if $x=c_{1}\tjd$ and  $t\in(0,1)  $, then
\[
\ualf(c_{1}\tjd, t ) =\lima \ual (c_{1}\tjd, t ) = \lima \int_{c_{1}t^{\frac{1-\al}{2}}}^{c_{1}} \sumn (\Lcja^{n}\Gcja)(y) dy =0,
\]
where we again applied (\ref{classm}). Therefore, we deduce that
\eqq{\uj(x,t)=\int_{x\tmjd }^{c_{1}} \sumn (\Lcjj^{n}\Gcjj)(y) dy, \hd \m{ for } t>0, \hd x\in [0,c_{1}\tjd].   }{clac}
Our next aim is to prove a uniform convergence of $\ualf$ to $u_{1}$ on every compact subset of $\{(x,t): 0< t < \infty, \hd x \in [0,c_{1}t^{\frac{1}{2}}]\}$.
To this end we fix $0<\tds<\tgs$ and we denote
\[
\Qts = \{(x,t): \hd t\in [\tds, \tgs], \hd x\in [0,c_{1}\tjd] \}.
\]
Then, from $\ual(c_{1}\tad, t )=0$ we deduce that $\ualf \in C(\Qts)$. We shall show that $\ualf$ converges uniformly to $\uj$ on $\Qts$. Let us fix $\ep>0$. Without loss of generality, we may assume that $\ep<2c_{1}^{2}e^{c_{1}^{2}}(1- \tds^{\jd})$ in case of $\tds<1$ and $\ep<2c_{1}^{2}e^{c_{1}^{2}}(1- {\tgs}^{-\jd})$ in case of $\tgs>1$. Then, from (\ref{classm}), (\ref{classn}) and LDCT we deduce that there exists $\alz\in (0,1)$ such that
\eqq{\int_{0}^{c_{1}}   \left| \sumn (\Lcjj^{n}\Gcjj)(y) - \sumn (\Lcja^{n}\Gcja)(y)  \right|dy \leq \frac{\ep}{2} \hd \m{ for all \hd } \al\in (\alz, 1) . }{clad}
To estimate $(*):=| \ualf(x,t)- \uj(x,t)|  $ for $(x,t)\in \Qts$, we have to consider three cases.
\begin{itemize}
\item
Case $x\in [0, c_{1}\tjd]$ and $t\leq 1$. In this case we have $\tds\leq 1$ and we may write
\[
(*)= | \ual(x,t)- \uj(x,t)| = \left| \int_{x\tmad }^{c_{1}} \sumn (\Lcja^{n}\Gcja)(y) dy- \int_{x\tmjd }^{c_{1}} \sumn (\Lcjj^{n}\Gcjj)(y) dy  \right|
\]
\[
\leq  \int_{x\tmad }^{x\tmjd} \sumn (\Lcja^{n}\Gcja)(y) dy+
 \int_{0 }^{c_{1}} \left| \sumn (\Lcja^{n}\Gcja)(y)- \sumn (\Lcjj^{n}\Gcjj)(y) \right| dy
\]
\[
\leq c_{1}e^{c_{1}^{2}} x(\tmjd - \tmad) + \frac{\ep}{2},
\]
where we applied (\ref{classm}) and (\ref{clad}). We define $\al_{1}\in (0,1)$ by the equality $c_{1}^{2}e^{c_{1}^{2}}(1-\tds^{\frac{1-\al_{1}}{2}})= \frac{\ep}{2}$. Then, for $\al\in (\max\{\al_{0}, \al_{1} \}, 1 )$ we have
\[
(*)\leq  c_{1}^{2}e^{c_{1}^{2}} \tjd(\tmjd - \tmad) + \frac{\ep}{2} = c_{1}^{2}e^{c_{1}^{2}} (1 - t^{\frac{1-\al}{2}}) + \frac{\ep}{2}\]
\[
\leq c_{1}^{2}e^{c_{1}^{2}} (1 - \tds^{\frac{1-\al}{2}}) + \frac{\ep}{2} \leq  c_{1}^{2}e^{c_{1}^{2}} (1 - \tds^{\frac{1-\al_{1}}{2}}) + \frac{\ep}{2}=\ep.
\]
\item
Case $x\in [c_{1}\tad, c_{1} \tjd] $ and $t\geq 1$. In this case we have $\tgs\geq 1$ and $(*)= \uj(x,t)$. We define $\al_{2}\in (0,1)$ by the equality $c_{1}^{2}e^{c_{1}^{2}}(1 - {\tgs}^{\frac{\al_{2}-1}{2}}  ) = \frac{\ep}{2} $. Then for $\al\in (\al_{2}, 1)$ we have
\[
(*) = \int_{x\tmjd }^{c_{1}} \sumn (\Lcjj^{n}\Gcjj)(y) dy \leq c_{1}e^{c_{1}^{2}}(c_{1}- x \tmjd ) \leq c_{1}^{2}e^{c_{1}^{2}}(1-t^{\frac{\al-1}{2}})\leq
\]
\[
\leq c_{1}^{2}e^{c_{1}^{2}}(1-{\tgs}^{\frac{\al-1}{2}})\leq c_{1}^{2}e^{c_{1}^{2}}(1-{\tgs}^{\frac{\al_{2}-1}{2}})=\frac{\ep}{2},
\]
where we applied (\ref{classm}).
\item
Case $x\in [0,c_{1}\tad ]$ and $t\geq 1$. In this case we have $\tgs\geq 1$ and
\[
(*)= | \ual(x,t)- \uj(x,t)| = \left| \int_{x\tmad }^{c_{1}} \sumn (\Lcja^{n}\Gcja)(y) dy-  \int_{x\tmjd }^{c_{1}}\sumn (\Lcjj^{n}\Gcjj)(y) dy  \right|
\]
\[
\leq  \int^{x\tmad }_{x\tmjd} \sumn (\Lcja^{n}\Gcja)(y) dy+
 \int_{0 }^{c_{1}} \left| \sumn (\Lcja^{n}\Gcja)(y)-  \sumn (\Lcjj^{n}\Gcjj)(y) \right| dy
\]
\[
\leq c_{1}e^{c_{1}^{2}} x( \tmad - \tmjd ) + \frac{\ep}{2},
\]
where we applied (\ref{classm}) and (\ref{clad}).  Then, for $\al\in (\max\{\al_{0}, \al_{2} \}, 1 )$ we have
\[
(*)\leq  c_{1}^{2}e^{c_{1}^{2}} \tad(  \tmad - \tmjd) + \frac{\ep}{2} = c_{1}^{2}e^{c_{1}^{2}} (1 - t^{\frac{\al-1}{2}}) + \frac{\ep}{2}
\]
\[
\leq c_{1}^{2}e^{c_{1}^{2}} (1 - {\tgs}^{\frac{\al-1}{2}}) + \frac{\ep}{2} \leq c_{1}^{2}e^{c_{1}^{2}} (1 - {\tgs}^{\frac{\al_{2}-1}{2}}) + \frac{\ep}{2} =\ep.
\]
\end{itemize}
We note that in the calculations above the constant $\al_{1} = \al_{1}(t_{*})$ appears only in the case when $t_{*} \leq 1$ and similarly $\al_{2} = \al_{2}(t^{*})$ appears only in the case when $t^{*} \geq 1$. Hence in general case if $t_{*} > 1$ we set $\al_{1} = 0$ and if $t^{*} < 1$ we set $\al_{2} = 0$.
Then we may write that for any $0<t_{*}<t^{*}$ and any $\varepsilon$ small enough, if $\al\in (\max\{\al_{0}, \al_{1}, \al_{2}  \} ,1)$ then
\[
| \ualf (x,t) - \uj(x,t)| \leq \ep \m{ \hd for \hd } (x,t)\in \Qts
\]
and as a consequence, $\uj $ is continuous on $\Qts$.
\no Having proven a uniform convergence of $\ualf$ to $\uj$ we may calculate the limit equation satisfied by $\uj$. We note that from a construction $\ual$ satisfies
\eqq{\Dasmxa \ual (x,t) -\ualxx (x,t) = -\jgja (t -\smxa)^{-\al}  \hd \m{ for \hd } t>0, \hd x\in (0,c_{1}\tad).  }{disa}
We fix $\vp \in C^{\infty}_{0}(\Qts)$ 
and we multiply (\ref{disa}) by $\vp $. Then we integrate the equation over $Q_{s_{\al},t^{*}}$ and we arrive at
\[
\int_{0}^{s_{\al}(t^{*})}  \insats \Dasmxa \ual (x,t) \vp(x,t) dt dx - \int_{0}^{s_{\al}(t^{*})}  \insats \ualxx (x,t) \vp(x,t) dt dx =
\]
\eqq{ - \jgja \int_{0}^{s_{\al}(t^{*})}   \insats  (t- \smxa )^{-\al} \vp(x,t)dt dx. }{disb}
We shall calculate the limit in all the above terms. Firstly, we note that $\ual(x,\smxa)=0$ and by Theorem~\ref{samo} we have $\ual(x,\cdot)\in AC[\smxa, \tgs]$ so we may write
\[
\int_{0}^{s_{\al}(t^{*})}   \insats \Dasmxa \ual (x,t) \vp(x,t) dt dx
\]
\[
 = \jgja \int_{0}^{s_{\al}(t^{*})}   \insats \insat \ta    \ualt (x,\tau) d \tau  \vp(x,t) dt dx
\]
\[
 = \jgja \int_{0}^{s_{\al}(t^{*})}   \insats  \ddt \left[ \insat \ta    \ual (x,\tau) d \tau \right] \vp(x,t) dt dx.
\]
If we integrate by parts and next we apply the Fubini theorem we get that
\[
 -\jgja \int_{0}^{s_{\al}(t^{*})}   \insats    \insat \ta    \ual (x,\tau) d \tau \vp_{t}(x,t) dt dx
\]
\eqq{
=  -\jgja \int_{0}^{s_{\al}(t^{*})}   \insats    \intats \ta     \vp_{t}(x,t) dt\ual (x,\tau) d \tau  dx.
}{doleb}
We note that
\[
\jgja  \intats \ta     \vp_{t}(x,t) dt
\]
\[
= \jgja  \intats \ta   [  \vp_{t}(x,t) - \vp_{t}(x,\tau)] dt+ \vp_{t}(x,\tau)\frac{(\tgs - \tau )^{1-\al}}{\Gamma(2-\al)}
\]
\[
= \jgja  \intats (t-\tau)^{1-\al} \fint_{\tau}^{t} \vp_{tt}(x,s)ds dt+ \vp_{t}(x,\tau)\frac{(\tgs - \tau )^{1-\al}}{\Gamma(2-\al)}
\underset{\al\nearrow 1}{\longrightarrow }\vp_{t}(x,\tau),
\]
because $\lima \jgja =0 $. We may write the expression (\ref{doleb}) in the following way
\[
- \int_{0}^{\infty}\int_{0}^{t^{*}}\chi_{[0,s_{\al}(t^{*})]}(x)\chi_{[s_{\al}^{-1}(x),t^{*}]}(\tau)
\frac{1}{\Gamma(1-\al)}\int_{\tau}^{t^{*}}(t-\tau)^{-\al}{\bar{\varphi}}_{t}(x,t)dt {\bar{u}_{\al}}(x,\tau)d\tau dx,
\]
where $\bar{\varphi}$ and $\bar{u}_{\al}$ denote the extensions of $\varphi$ and $u_{\al}$ by zero on $[0,\infty)\times[0,t^{*}]$.
We recall that by (\ref{classm}) we have
\[
u_{\al}(x,t) \leq c_{1}^{2}e^{c_{1}^{2}} \m{ for any } t> 0 \m{ and } x \in [0,c_{1}t^{\frac{\al}{2}}].
\]
Furthermore, for $\al$ close enough to one, we get
\[
\abs{\frac{1}{\Gamma(1-\al)}\int_{\tau}^{t^{*}}(t-\tau)^{-\al}\varphi_{t}(x,t)dt} \leq 2\abs{\varphi_{t}(x,\tau)} \m{ for any } (x,t) \in Q_{t_{*},t^{*}}.
\]
Hence, we arrive at the following estimate
\[
\abs{\chi_{[0,s_{\al}(t^{*})]}(x)\chi_{[s_{\al}^{-1}(x),t^{*}]}(\tau)
\frac{1}{\Gamma(1-\al)}\int_{\tau}^{t^{*}}(t-\tau)^{-\al}{\bar{\varphi}}_{t}(x,t)dt {{\bar{u}}_{\al}}(x,\tau)}
\]
\[
 \leq 2\abs{\varphi_{t}(x,\tau)}c_{1}^{2}e^{c_{1}^{2}}\chi_{[0,\max\{s_{1}(t^{*}),c_{1}\}]}(x).
\]
Recalling that $\widetilde{u}_{\al}(x,t) \rightarrow u_{1}(x,t)$ for $(x,t) \in Q_{t_{*},t^{*}}$ we may apply LDCT to get
\eqq{
\lima \int_{0}^{s_{\al}(t^{*})}   \insats \Dasmxa \ual (x,t) \vp(x,t) dt dx =-\int_{0}^{s_{1}(t^{*})}   \insjts \vp_{t}(x,\tau)  \uj(x,\tau) d\tau dx.
}{disc}
We apply the Fubini theorem and then integration by parts formula, together with the fact that $u_{\al,x}(s_{\al}(t),t) = u_{\al}(s_{\al}(t),t)=0$
to get
\[
\int_{0}^{s_{\al}(t^{*})}  \insats \ualxx (x,t) \vp(x,t) dt dx = \int_{0}^{t^{*}}\int_{0}^{s_{\al}(t)}u_{\al,xx}(x,t)\varphi(x,t)dxdt
\]
\[
=- \int_{0}^{t^{*}}\int_{0}^{s_{\al}(t)}u_{\al,x}(x,t)\varphi_{x}(x,t)dxdt
= \int_{0}^{t^{*}}\int_{0}^{s_{\al}(t)}u_{\al}(x,t)\varphi_{xx}(x,t)dxdt.
\]
Hence, applying again LDCT we obtain that
\eqq{
\lima \int_{0}^{s_{\al}(t^{*})}  \insats \ualxx (x,t) \vp(x,t) dt dx =\int_{0}^{s_{1}(t^{*})}  \insjts \uj (x,t) \vp_{xx}(x,t) dt dx.
}{disd}
Finally, after integrating by parts we obtain
\[
- \jgja \int_{0}^{s_{\al}(t^{*})}   \insats  (t- \smxa )^{-\al} \vp(x,t)dt dx
\]
\[
=  \frac{1}{\Gamma(2-\al)} \int_{0}^{s_{\al}(t^{*})}   \insats  (t- \smxa )^{1-\al} \vp_{t}(x,t)dt dx.
\]
We note that for every $(x,t) \in Q_{s_{\al},t^{*}}$ there holds
\[
(t- \smxa )^{1-\al}\rightarrow 1.
\]
Hence, applying again LDCT we obtain that
\[
- \jgja \int_{0}^{s_{\al}(t^{*})}   \insats  (t- \smxa )^{-\al} \vp(x,t)dt dx
\]
\[
\underset{\al\nearrow 1}{\longrightarrow } \int_{0}^{s_{1}(t^{*})}   \insjts  \vp_{t}(x,t)dt dx = \int_{0}^{s_{1}(t^{*})}  \vp(x, \tgs ) - \vp(x,\smxj) dx=0,
\]
where the last equality holds, because $\vp$ vanishes on a neighborhood of the boundary $\Qts$. Therefore, taking into account the last equality together with (\ref{disc}) and (\ref{disd}), we deduce that $\uj$ is a solution to heat equation
\eqq{u_{1,t}- u_{1,xx}=0 \hd \m{ in } \hd \Qts,}{dise}
at least in the distributional sense.
To complete the proof of Theorem \ref{uniformly} it remains to prove that $u_{1}$ is given by (\ref{classe}).
We recall that $\uj$ is defined by (\ref{classg}), (\ref{classi}), (\ref{clsaal}) and (\ref{clac}).

\no At first, we will show by induction that
\eqq{M_{n,1}(p,y) = \frac{2p}{4^{n}(n-1)!}(p^{2}-y^{2})^{n-1} \hd \m{ for } \hd 0\leq y <p, \hd  \hd n \in \mathbb{N}.}{zbd}
From (\ref{classf}) we see that the formula in (\ref{zbd}) is fulfilled for $n=1$. Let us fix a natural number $k \geq 2$. We assume that for any $l \in \mathbb{N}$ such that $1\leq l\leq k$ we have
\[
M_{l,1}(p,y) = \frac{2p}{4^{l}(l-1)!}(p^{2}-y^{2})^{l-1}.
\]
Then
\[
M_{k+1,1}(p,y) = \frac{1}{2}\int_{y}^{p}aM_{k,1}(p,a)da = \frac{2p}{2\cdot 4^{k} (k-1)!}\int_{y}^{p}a(p^{2}-a^{2})^{k-1}da.
\]
Applying the substitution $a^{2} = w$ we have
\[
M_{k+1,1}(p,y) =\frac{p}{2\cdot 4^{k} (k-1)!}\int_{y^{2}}^{p^{2}}(p^{2}-w)^{k-1}dw = \frac{2p}{4^{k+1}k!}(p^{2} - y^{2})^{k}
\]
and we arrive at the formula (\ref{zbd}) for $n=k+1$. Thus, by the principle of mathematical induction (\ref{zbd}) is proven.
Let us calculate $L^{n}_{c_{1}}G_{c_{1},1}$. Making use of (\ref{clsaal}) and (\ref{zbd}) we get
\[
L^{n}_{c_{1}}G_{c_{1},1}(y) = \int_{y}^{c_{1}}M_{n,1}(p,y)G_{c_{1},1}(p)dp = \frac{c_{1}}{2}\frac{2}{4^{n}(n-1)!} \int_{y}^{c_{1}}p(p^{2}-y^{2})^{n-1}dp.
\]
Applying the substitution $p^{2} = w$ we have
\[
L^{n}_{c_{1}}G_{c_{1},1}(y)=\frac{c_{1}}{2}\frac{1}{4^{n}(n-1)!} \int_{y^{2}}^{c_{1}^{2}}(w-y^{2})^{n-1}dw = \frac{c_{1}}{2\cdot 4^{n}n!}(c_{1}^{2}-y^{2})^{n} = \frac{c_{1}}{2}\frac{1}{n!}\left(\left(\frac{c_{1}}{2}\right)^{2}-\left(\frac{y}{2}\right)^{2}\right)^{n}.
\]
Hence,
\[
u_{1}(x,t) = \int_{\frac{x}{\sqrt{t}}}^{c_{1}}\sum_{n=0}^{\infty}L^{n}_{c_{1}}G_{c_{1},1}(y) dy =
\frac{c_{1}}{2}\int_{\frac{x}{\sqrt{t}}}^{c_{1}}\sum_{n=0}^{\infty}\frac{1}{n!}\left(\left(\frac{c_{1}}{2}\right)^{2}-\left(\frac{y}{2}\right)^{2}\right)^{n}
\]
\[
= \frac{c_{1}}{2}\int_{\frac{x}{\sqrt{t}}}^{c_{1}} e^{(\frac{c_{1}}{2})^{2}-(\frac{y}{2})^{2}}dy = \frac{c_{1}}{2}e^{(\frac{c_{1}}{2})^{2}}\int_{\frac{x}{\sqrt{t}}}^{c_{1}} e^{-(\frac{y}{2})^{2}}dy.
\]
We substitute $y = 2w$ to get
\[
u_{1}(x,t) =c_{1}e^{(\frac{c_{1}}{2})^{2}}\int_{\frac{x}{2\sqrt{t}}}^{\frac{c_{1}}{2}} e^{-w^{2}}dw.
\]
Setting $a = \frac{c_{1}}{2}$ we arrive at
\[
u_{1}(x,t) =2a e^{a^{2}}\int_{\frac{x}{2\sqrt{t}}}^{a} e^{-w^{2}}dw.
\]
Therefore, the function $\uj$ together with $s_{1} = c_{1}t^{\frac{1}{2}}$ is a self-similar solution to the classical Stefan problem (\ref{classa}) - (\ref{classd}). For a construction to a self-similar solution to the classical Stefan problem we refer to Example~1 in Chap. 1.3 \cite{Andreucci}.
In this way we proved the claim.
\end{proof}

\section{Acknowledgments}
The authors are grateful to Prof. Vaughan Voller for his inspiration and fruitful discussions.
The authors would like to thank also prof. Andrea N. Ceretani for an information about the paper \cite{Ros}. Finally, we would like to thank Prof. Piotr Rybka and Prof. Sabrina Roscani for their valuable remarks. 
The authors were partly supported by the National Sciences Center, Poland through 2017/26/M/ST1/00700 Grant.

\end{document}